\newtheorem{thm}{Theorem}
\newtheorem{ob}[thm]{Observation}
\newtheorem{cor}[thm]{Corollary}
\newtheorem{conj}[thm]{Conjecture}
\newtheorem{clm}{Claim}[thm]
\newcommand{\cL}{\mathcal{L}}
\newcommand{\diam}{{\rm diam}}
\newcommand{\gt}{\gamma_t}
\newcommand{\gL}{\gamma_L}
\newcommand{\gLT}{\gamma_t^L}
\newcommand{\gLe}{\gamma'_L}
\newcommand{\gwLe}{\gamma'_{wL}} 
\newcommand{\gLTe}{\gamma_{t,L}'}
\newcommand{\smallqed}{{\tiny ($\Box$)}}
\newenvironment{unnumbered}[1]{\trivlist
\item [\hskip \labelsep {\bf #1}]\ignorespaces\it}{\endtrivlist}
\newcommand{\claimproof}{\noindent\emph{Proof of claim.} }
\let\oldenumerate\enumerate
\renewcommand{\enumerate}{
  \oldenumerate
  \setlength{\itemsep}{0pt}
  \setlength{\parskip}{1pt}
  \setlength{\parsep}{0pt}
}
\begin{document}

\title{Location-Domination in Line Graphs}

\author{$^{1,2}$Florent Foucaud and $^1$Michael A. Henning\\
\\
$^1$Department of Pure and Applied Mathematics \\
University of Johannesburg\\
Auckland Park, 2006 South Africa\\
E-mail: mahenning@uj.ac.za \\
\\
$^2$LIMOS - CNRS UMR 6158\\
Universit\'e Blaise Pascal\\
Clermont-Ferrand, France\\
E-mail: florent.foucaud@gmail.com \\
}

\date{}
\maketitle

\begin{abstract}
A set $D$ of vertices of a graph $G$ is locating if every two distinct vertices outside $D$ have distinct neighbors in $D$; that is, for distinct vertices $u$ and $v$ outside $D$, $N(u) \cap D \ne N(v) \cap D$, where $N(u)$ denotes the open neighborhood of $u$. If $D$ is also a dominating set (total dominating set), it is called a locating-dominating set (respectively, locating-total dominating set) of $G$. A graph $G$ is twin-free if every two distinct vertices of $G$ have distinct open and closed neighborhoods. It is conjectured [D. Garijo, A. Gonz\'alez and A. M\'arquez, The difference between the metric dimension and the determining number of a graph. Applied Mathematics and Computation 249 (2014), 487--501] and [F. Foucaud and M. A. Henning. Locating-total dominating sets in twin-free graphs: a conjecture. The Electronic Journal of Combinatorics 23 (2016), P3.9] respectively, that any twin-free graph $G$ without isolated vertices has a locating-dominating set of size at most one-half its order and a locating-total dominating set of size at most two-thirds its order. In this paper, we prove these two conjectures for the class of line graphs. Both bounds are tight for this class, in the sense that there are infinitely many connected line graphs for which equality holds in the bounds.
\end{abstract}

{\small \textbf{Keywords:} Locating-dominating sets; Locating-total dominating sets; Dominating sets; Total dominating sets; Line graphs.}\\
\indent {\small \textbf{AMS subject classification: 05C69}

\newpage
\section{Introduction}

In this paper, we prove two recent conjectures on locating-dominating sets and locating-total dominating sets in graphs for the class of line graphs. In order to state these conjectures, we define the necessary graph theory terminology that we shall use.
A \emph{dominating set} in a graph $G$ is a set $D$ of vertices of $G$ such that every vertex outside $D$ is adjacent to a vertex in $D$, while a \emph{total dominating set}, abbreviated TD-set, of $G$ is a dominating set with the additional property that every vertex inside $D$ is also adjacent to a vertex in $D$. The \emph{domination number}, $\gamma(G)$, and the \emph{total domination number} of $G$, denoted by $\gt(G)$, is the minimum cardinality of a dominating set and a TD-set, respectively, in $G$. The literature on the subject of domination parameters in graphs up to the year 1997 has been surveyed and detailed in the two books~\cite{hhs1, hhs2}, and a recent book on total dominating sets is also available~\cite{bookTD}.

A \emph{neighbor} of a vertex $v$ in $G$ is a vertex adjacent to $v$ in $G$, while the \emph{open neighborhood of $v$} is the set of all neighbors of $v$ in $G$. The \emph{closed neighborhood of $v$} consists of all neighbors of $v$ together with the vertex $v$. A graph is \emph{twin}-\emph{free} if every two distinct vertices have distinct open and closed neighborhoods.

Among the existing variations of (total) domination, the one of \emph{location-domination} and \emph{location-total domination} are widely studied. A set $D$ of vertices \emph{locates} a vertex $v \notin D$ if the neighborhood of $v$ within $D$ is unique among all vertices in $V(G)\setminus D$. A \emph{locating}-\emph{dominating set} is a dominating set $D$ that locates all the vertices in $V(G) \setminus D$, and the \emph{location-domination number} of $G$, denoted $\gL(G)$, is the minimum cardinality of a locating-dominating set in $G$. A \emph{locating}-\emph{total dominating set}, abbreviated LTD-set, is a TD-set $D$ that locates all the vertices, and the \emph{location-total domination number} of $G$, denoted $\gLT(G)$, is the minimum cardinality of a LTD-set in $G$. The concept of a locating-dominating set was introduced and first studied by Slater~\cite{s2,s3} (see also~\cite{cst,fh,Heia,rs,s4}), and the additional condition that the locating-dominating set be a total dominating set was first considered in~\cite{hhh06} (see also~\cite{BCMMS07,BD11,BFL08,C08,CR09,CS11,hl12,hr12}).

A classic result in domination theory due to Ore~\cite{o9} states that every graph without isolated vertices has a dominating set of cardinality at most one-half its order. This bound is tight and the extremal examples have been classified, see~\cite{px82}. As observed in~\cite{Heia}, while there are many graphs (without isolated vertices) which have location-domination number much larger than one-half their order, the only such graphs that are known contain many twins. For example, for the complete graph $K_n$ of order $n$, we have $\gL(K_n) = n-1$ for all $n \ge 3$. It was therefore recently conjectured by Garijo et al.~\cite{conjpaper} that for sufficiently large values of the order and in the absence of twins and multiple components, the classic bound of one-half the order for the domination number also holds for the location-domination number.

\begin{conj}[Garijo, Gonz\'alez, M\'arquez~\cite{conjpaper}]\label{conj-LD-original}
There exists an integer $n_1$ such that for any $n\geq n_1$, the maximum value of the location-domination number of a connected twin-free graph of order~$n$ is $\lfloor\frac{n}{2}\rfloor$.
\end{conj}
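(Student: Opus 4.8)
The plan is to establish two things: that $\gL(G)\le\lfloor n/2\rfloor$ for every connected twin-free graph $G$ of order $n\ge n_1$ (the upper bound), and that this value is attained for infinitely many such graphs of every order (tightness), so that the stated maximum is exactly $\lfloor n/2\rfloor$. The tightness direction is the more routine of the two, and virtually all of the difficulty is concentrated in the upper bound; indeed, it is the upper bound that keeps the statement at the level of a conjecture. I therefore treat the two directions separately, devoting most of the effort to the upper bound.

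For tightness I would construct an explicit infinite family $\{G_k\}$ of connected twin-free graphs of order $n=2k$ (with a one-vertex modification handling odd orders) satisfying $\gL(G_k)=k$. The mechanism is to build $G_k$ out of $k$ local gadgets, each contributing two vertices, arranged so that (a) from each gadget at least one vertex is forced into every locating-dominating set — typically because the two vertices of a gadget can be distinguished from one another only by placing one of them in $D$ — and (b) the gadgets are linked along a connected backbone whose incidences break all open- and closed-neighborhood coincidences, so that the whole graph is twin-free. The lower bound $\gL(G_k)\ge k$ then follows from a gadget-by-gadget forcing argument, while $\gL(G_k)\le k$ is witnessed by choosing one forced vertex per gadget and verifying that the resulting set both dominates and locates.

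For the upper bound I would argue by induction on $n$, treating all orders below the threshold $n_1$ as finitely many base cases. For the inductive step, the natural starting point is a minimum dominating set $D_0$, which by Ore's theorem satisfies $|D_0|\le n/2$; the obstruction to $D_0$ being locating is the presence of \emph{confused pairs}, that is, distinct vertices $u,v\notin D_0$ with $N(u)\cap D_0=N(v)\cap D_0$. The confused pairs partition $V(G)\setminus D_0$ into code classes, and within each class all but one vertex must be separated. Here the twin-free hypothesis is the essential lever: since $u$ and $v$ are not twins in $G$, some vertex of $G$ distinguishes them, and such a distinguisher can in principle be recruited into $D$. The scheme is thus to convert $D_0$ into a locating-dominating set by a bounded number of swaps and additions, charging the cost of resolving confusions against the vertices that remain outside $D$, so that the final set still has size at most $\lfloor n/2\rfloor$.

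The hard part — and the reason this remains a conjecture in full generality — is precisely the accounting in the inductive step: one must show that confusions can always be resolved within the $n/2$ budget, which fails for the naive strategy of adding one fresh distinguisher per confused pair, and must instead proceed by a global argument that simultaneously controls the sizes of the code classes and the availability of shared distinguishers. Compounding this, any reduction or swapping operation risks creating new twins, disconnecting the graph, or producing isolated vertices, so each move must be accompanied by a verification that the twin-free and connectivity invariants are preserved or repaired. A sensible route to the obstacle is to first prove the bound for structurally constrained families in which code classes are provably small and distinguishers are plentiful, isolate the combinatorial core of the accounting there, and only then attempt to remove the structural hypotheses and reach the general connected twin-free case.
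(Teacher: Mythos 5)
You have attempted to prove the one statement in the paper that is not proved there: Conjecture~\ref{conj-LD-original} is open, and the paper establishes it only for line graphs. Moreover, of your two directions, the tightness half is already settled in the literature --- the paper cites Garijo et al.~\cite{conjpaper} for the fact that for all $n\ge 14$ the maximum is at least $\lfloor n/2\rfloor$ --- so the entire content of the statement is the upper bound $\gL(G)\le \lfloor n/2\rfloor$, i.e.\ essentially Conjecture~\ref{conj-LD}. Your upper-bound argument has a genuine gap, which you yourself concede in your final paragraph: the charging scheme that resolves all confused pairs within the $n/2$ budget is not supplied, and that accounting \emph{is} the conjecture. Concretely: (a) twin-freeness guarantees, for each confused pair $u,v$, a distinguishing vertex $w$, but adding $w$ to $D$ costs one unit of budget while possibly separating only one pair of a code class of size $t$ (which needs $t-1$ separations), and each addition can create fresh confusions among vertices it newly dominates; no bound on the total cost against the slack $\lfloor n/2\rfloor-|D_0|$ is given, and indeed starting from a minimum dominating set there may be no slack at all when $\gamma(G)=n/2$. (b) The induction on $n$ is vacuous as stated: no reduction is described that passes from $G$ to a smaller \emph{connected twin-free} graph to which the hypothesis applies, and, as you note without repairing it, deletions typically create twins, isolated vertices, or disconnection. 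So the proposal is a research program, not a proof.

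For contrast, the paper's actual route (for line graphs) avoids the vertex-level repair problem entirely: it translates $\gL$ on a line graph $\cL(H)$ into edge-location-domination on the root graph $H$, and proves (Theorem~\ref{thm:eLD}) that every graph with $m$ edges and no isolated edge has a \emph{weak} edge-locating-dominating set of size at most $m/2$, by taking a minimum counterexample and eliminating structure through a chain of claims (connectivity, edge-twin-freeness, existence of a cycle, no $K_4$, no diamond, triangle-freeness, no $4$-cycles, even girth): in each case a small edge set $S$ is deleted, minimality supplies a WELD-set of the remainder of size at most $(m-|S|)/2$, and at most $|S|/2$ edges are added back while verifying the two extension conditions (i) and (ii). The weak variant (which only requires locating non-edge-twin pairs) is precisely what makes the deletion steps safe --- it neutralizes the twin-creation problem that blocks your step (b). Corollary~\ref{cor:eLD} then gives Conjecture~\ref{conj-LD}, and hence the upper bound in Conjecture~\ref{conj-LD-original}, for line graphs only; the general case remains open, and your sketch does not close it.
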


We proposed in~\cite{cubic,Heia} the following strengthening of Conjecture~\ref{conj-LD-original}.\footnote{Note that in~\cite{Heia}, we mistakenly attributed Conjecture~\ref{conj-LD} to the authors of~\cite{conjpaper}. We discuss this in more detail in~\cite{cubic}.}

\begin{conj}[Foucaud, Henning, L\"owenstein and Sasse~\cite{cubic,Heia}]\label{conj-LD}
Every twin-free graph $G$ of order~$n$ without isolated vertices satisfies $\gL(G)\le \frac{n}{2}$.
\end{conj}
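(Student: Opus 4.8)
The plan is to attack the conjecture by induction on the order $n$, after reformulating the goal in complementary form: since $|D| \le \frac{n}{2}$ is equivalent to $|V(G) \setminus D| \ge \frac{n}{2}$, it suffices to produce a set $S \subseteq V(G)$ with $|S| \ge \frac{n}{2}$ such that, writing $D = V(G) \setminus S$, every vertex of $S$ has a neighbor in $D$ (domination) and the map $v \mapsto N(v) \cap D$ is injective on $S$ (location). Thus we seek a large \emph{located complement} rather than a small locating-dominating set directly, which makes the counting toward $\frac{n}{2}$ more transparent and suggests a pairing strategy: build $S$ and $D$ roughly in tandem, contributing one vertex to each side from local pieces of $G$.

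First I would develop an inductive framework based on \emph{reducible configurations}: small vertex subsets $X$ whose removal leaves a smaller graph $G'$ that is again twin-free and without isolated vertices, and for which any locating-dominating set of $G'$ extends to $G$ by adding at most $\frac{1}{2}|X|$ new vertices. The natural candidates to analyze are degree-one vertices together with their support vertices, maximal paths of degree-two vertices, and adjacent pairs with tightly controlled neighborhoods; for each such configuration I would exhibit an explicit extension rule and verify that it preserves the $\frac{n}{2}$ budget. A maximal matching can seed this process, since its saturated vertices already form a dominating set, leaving only the independent set of unmatched vertices to be located.

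The hard part, and the reason this statement remains a conjecture in full generality, is that neither hypothesis is hereditary under vertex deletion. Deleting a vertex can isolate a neighbor, and, more insidiously, can merge two previously distinguished vertices into a twin pair, so that the inductive hypothesis fails to apply to $G'$. The crux of any such proof is therefore to design each reduction so that these side effects are avoided or repaired locally: whenever a deletion would create an isolated vertex or a twin pair, one must either select a different configuration or absorb the offending vertices into $D$ while charging their cost against savings made elsewhere, via a discharging or potential-function argument that keeps the running total at most $\frac{n}{2}$. Controlling the number of potential twin-collisions created by a deletion is precisely where a uniform argument is elusive.

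Finally, I would expect a separate treatment of highly structured, extremal-looking pieces — dense blocks or long induced paths — where the generic reduction is too wasteful, using the explicit families that achieve equality $\gL(G) = \frac{n}{2}$ as a guide to the tight cases. A realistic intermediate target, paralleling the present paper's restriction to line graphs, is to first establish the bound under an added hypothesis that controls the degree sequence (for instance bounded maximum degree, or a minimum-degree condition), where the twin-collisions produced by a reduction can be bounded outright, and only afterward to attempt to remove that hypothesis.
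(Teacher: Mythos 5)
There is a genuine gap, and it is one you partly name yourself: what you have written is a research plan, not a proof. Conjecture~\ref{conj-LD} is open in general, and this paper does not claim to prove it --- it proves it only for the class of line graphs (Corollary~\ref{cor:eLD}). Your proposal never exhibits a single verified reducible configuration: the ``extension rules'' for leaves, degree-two paths, and matched pairs are asserted to exist but not constructed, and no discharging or potential function is defined. The step that would fail is exactly the one you flag in your third paragraph: after deleting a configuration $X$, the smaller graph $G'$ may contain twins (or isolated vertices), so the inductive hypothesis simply does not apply to $G'$, and you offer no mechanism --- only the hope of one --- for repairing this. Since an unbounded number of twin pairs can be created by a single local deletion, the ``charging cost against savings elsewhere'' has nothing concrete to draw on, and the argument does not close.

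It is worth seeing how the paper evades precisely this obstruction in the line-graph case, because the device is instructive. Rather than inducting on vertices of the line graph, the authors pass to the root graph and work with edges, and --- crucially --- they weaken the statement so that it becomes hereditary: Theorem~\ref{thm:eLD} asserts that \emph{every} graph with $m$ edges and no isolated edge (twin-freeness \emph{not} assumed) has a \emph{weak} edge-locating-dominating set of size at most $\frac{m}{2}$, where a WELD-set is only required to locate pairs of edges that are not edge-twins. Because edge-twins in general graphs are extremely restricted (Observation~\ref{Ob:twins}), this weaker invariant survives edge deletions: when removing a set $S$ of edges one need only check that edges of $S$ are located and that pairs which were edge-twins in $G-S$ but not in $G$ get separated, which is managed through a long minimal-counterexample analysis (Claims~\ref{clm:conn}--\ref{clm:evengirth}). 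Specializing to edge-twin-free graphs then yields Theorem~\ref{thm:eLD1} and hence Conjecture~\ref{conj-LD} for line graphs. In other words, the successful strategy is not to avoid or repair twin creation configuration by configuration, as you propose, but to strengthen the induction hypothesis so that twins are tolerated outright --- a move available in the edge setting because edge-twins are structurally classifiable, and unavailable (so far) for vertices of arbitrary twin-free graphs, which is why the general conjecture remains open.
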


Garijo et al.~\cite{conjpaper} proved that for any $n \geq 14$, the maximum value of the location-domination number of a connected twin-free graph is at least $\lfloor\frac{n}{2}\rfloor$. Thus, together with this fact, the statement of Conjecture~\ref{conj-LD} implies the statement of Conjecture~\ref{conj-LD-original}.

A classic result in total domination theory due to Cockayne et al.~\cite{CDH80} states that every graph with components of order at least~$3$ has a TD-set of cardinality at most two-thirds its order. This bound is tight and the extremal examples have been classified, see~\cite{BCV00}. As observed in~\cite{conjLTDpaper}, while there are many such graphs which have location-total domination number much larger than two-thirds their order, the only such graphs that are known contain many twins. For example, for the star $K_{1,n-1}$ of order $n$, we have $\gLT(K_{1,n-1})=n-1$ for all $n \ge 3$. The authors in~\cite{conjLTDpaper} conjectured that in the absence of twins, the classic bound of two-thirds the order for the total domination number also holds for the locating-total domination number.

\begin{conj}[Foucaud and Henning~\cite{conjLTDpaper}]\label{conj-LTD}
Every twin-free graph $G$ of order $n$ without isolated vertices satisfies $\gLT(G) \le \frac{2}{3}n$.
\end{conj}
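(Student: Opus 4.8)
Since Conjecture~\ref{conj-LTD} is stated for all twin-free graphs and is open in that generality, the statement one can realistically hope to establish is the restriction announced in the abstract, namely the case where $G$ is a line graph; this is what I would target. Writing $G = L(H)$, the first step is to recast the problem entirely on $H$. A vertex of $G$ is an edge of $H$, two vertices of $G$ are adjacent precisely when the corresponding edges of $H$ share an endpoint, and $n = |V(G)| = |E(H)|$. Under this dictionary, a set $S \subseteq E(H)$ is a locating-total dominating set of $L(H)$ if and only if (a) every edge of $H$, including each edge of $S$, shares an endpoint with a \emph{distinct} edge of $S$ (the total-domination condition); and (b) any two distinct edges $e, f \in E(H) \setminus S$ meet different subsets of $S$ (the location condition). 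I would also reduce to $H$ connected, since $L(H)$ is connected exactly when $H$ is, and both the bound and the twin-free hypothesis pass to components.

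The guiding observation is that the Cockayne--Dawes--Hedetniemi bound already yields $\gt(L(H)) \le \frac{2}{3}n$ whenever $L(H)$ has no isolated vertices, so the entire difficulty lies in meeting the location condition~(b) without exceeding $\frac{2}{3}n$. I would proceed by induction on $|E(H)|$. The plan is to isolate a small \emph{reducible} configuration in $H$ --- a pendant edge, a maximal path through vertices of degree two, or the edges incident to a vertex of small degree --- delete a constant number $t$ of its edges to obtain $H'$, apply the induction hypothesis to $L(H')$, and then add back at most $\frac{2}{3}t$ edges to repair total domination and location across the cut. The scheme is engineered so that three deleted edges cost at most two inserted edges, matching the $\frac{2}{3}$ ratio; the finitely many base cases and graphs admitting no reducible configuration would be checked directly.

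The twin-free hypothesis enters precisely in controlling which local configurations can occur, and this is what makes line graphs tractable. For instance, an isolated triangle of $H$, two pendant edges sharing an endpoint, or certain short degree-two paths each force a pair of true or false twins in $L(H)$; forbidding these both supplies the reducible configurations the induction needs and bounds the number of exceptional small graphs. I would therefore catalogue the local patterns in $H$ that twin-freeness of $L(H)$ excludes, and use this catalogue to guarantee that every sufficiently large connected $H$ contains one of the reducible configurations above.

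The main obstacle I anticipate is the coupling between deletion and the location condition: removing edges of $H$ can merge two previously distinguished neighborhoods, or create fresh pairs of edges with identical $S$-neighborhoods near the cut, and repairing these while respecting the $\frac{2}{3}$ accounting is delicate. The crux is thus a careful case analysis showing that in a twin-free line graph any such conflict is localized and can be resolved by returning a bounded number of edges to $S$ --- few enough to be absorbed by the slack in the $\frac{2}{3}$ bound --- and that the tight cases, where the ratio is essentially attained, coincide with an explicit infinite family of connected line graphs that I would construct to certify that $\gLT(G) \le \frac{2}{3}n$ cannot be improved for this class.
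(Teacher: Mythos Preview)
Your proposal correctly identifies that only the line-graph case is provable here, correctly translates the problem to edge-locating-total-dominating sets of $H$ with $G=L(H)$, and correctly frames the argument as induction on $|E(H)|$ with a remove--repair scheme respecting the $2/3$ budget. This is exactly the paper's high-level strategy. The main difference is in the choice of reducible configuration. You propose pendant edges, maximal degree-two paths, or the edges at a low-degree vertex; the paper instead splits according to whether $H$ is a tree and, if not, removes the edge set of a \emph{shortest cycle} of $H$ (treating the triangle, $4$-cycle, and girth $\ge 5$ cases separately). Your tree reduction via longest paths matches the paper's Claim for trees almost verbatim, but your other configurations would not by themselves cover the non-tree case cleanly: removing a single pendant edge gives no budget at all, and removing the star at a vertex does not obviously control the location condition. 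The cycle-based reduction is what makes the accounting go through uniformly.

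Where you flag the main obstacle --- that deletion can create new pairs of edges with identical neighborhoods --- you are exactly right, and this is where the paper invests essentially all of its technical effort. Its mechanism is to form, after removing the cycle $C$, two auxiliary edge sets: $S_0$, the edges of all components of $H-E(C)$ of order at most~$4$, and $S_1$, a carefully chosen set that kills each set of mutual closed edge-twins created in $H-(E(C)\cup S_0)$ (with special handling of so-called ``bad twin components''). Induction is applied to $H-(E(C)\cup S_0\cup S_1)$, and then a painstaking case analysis rebuilds an ELTD-set of $H$ by adding back at most $\tfrac{2}{3}$ of the removed edges. Your proposal anticipates this step but does not yet supply a concrete device for it; the paper's $S_0/S_1$ machinery is the missing ingredient you would need to make your scheme rigorous.
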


In this paper, we focus on the class of line graphs. We prove the two conjectures for this class, and discuss extremal examples. The key for this study is to define \emph{edge-locating-(total) dominating sets} (similar to edge-dominating sets) and to study this concept in general graphs.

\noindent\textbf{Definitions and Notation.} For notation and graph theory terminology, we in general follow~\cite{hhs1}. Specifically, let $G$ be a graph with vertex set $V(G)$, edge set $E(G)$ and with no isolated vertex. The \emph{open neighborhood} of a vertex $v \in V(G)$ is $N_G(v) = \{u \in V \, | \, uv \in E(G)\}$ and its \emph{closed neighborhood} is the set $N_G[v] = N_G(v) \cup \{v\}$. The degree of $v$ is $d_G(v) = |N_G(v)|$. For a set $S \subseteq V(G)$, its \emph{open neighborhood} is the set $N_G(S) = \bigcup_{v \in S} N_G(v)$, and its \emph{closed neighborhood} is the set $N_G[S] = N_G(S) \cup S$. If the graph $G$ is clear from the context, we simply write $V$, $E$, $N(v)$, $N[v]$, $N(S)$, $N[S]$  and $d(v)$ rather than $V(G)$, $E(G)$, $N_G(v)$, $N_G[v]$, $N_G(S)$, $N_G[S]$ and $d_G(v)$, respectively.

Given a set $S$ of edges, we will denote by $G-S$ the subgraph obtained from $G$ by deleting all edges of $S$. For a set $S$ of vertices, $G-S$ is the graph obtained from $G$ by removing all vertices of $S$ and removing all edges incident with vertices of $S$. The subgraph induced by a set $S$ of vertices (respectively, edges) in $G$ is denoted by $G[S]$. A \emph{cycle} on $n$ vertices is denoted by $C_n$ and a \emph{path} on $n$ vertices by $P_n$. A complete graph on four vertices minus one edge is called a \emph{diamond}. The \emph{girth} of $G$ is the length of a shortest cycle in $G$. A \emph{leaf} of $G$ is a vertex of degree~$1$ in $G$, while a \emph{pendant edge} of $G$ is an edge of $G$ with at least one of its ends a leaf.

A \emph{rooted tree} distinguishes one vertex $r$ called the \emph{root}. For each vertex $v \ne r$ of $T$, the \emph{parent} of $v$ is the neighbor of $v$ on the unique $(r,v)$-path, while a \emph{child} of $v$ is any other neighbor of $v$. A \emph{descendant} of $v$ is a vertex $u \ne v$ such that the unique $(r,u)$-path contains $v$. Let $D(v)$ denote the set of descendants of $v$, and let $D[v] = D(v) \cup \{v\}$. The \emph{maximal subtree} at $v$ is the subtree of $T$ induced by $D[v]$, and is denoted by $T_v$.

A set $D$ is a dominating set of $G$ if $N[v] \cap D \ne \emptyset$ for every vertex $v$ in $G$, or, equivalently, $N[D] = V(G)$. A set $D$ is a total dominating set of $G$ if $N(v) \cap D \ne \emptyset$ for every vertex $v$ in $G$, or, equivalently, $N(D) = V(G)$. Two distinct vertices $u$ and $v$ in $V(G) \setminus D$ are \emph{located} by $D$ if they have distinct neighbors in $D$; that is, $N(u) \cap D \ne N(v) \cap D$. If a vertex $u \in V(G) \setminus D$ is located from every other vertex in $V(G)\setminus D$, we simply say that $u$ is \emph{located} by $D$.

A set $S$ is a \emph{locating set} of $G$ if every two distinct vertices outside $S$ are located by $S$. In particular, if $S$ is both a dominating set and a locating set, then $S$ is a locating-dominating set. Further, if $S$ is both a total dominating set and a locating set, then $S$ is a \emph{locating-total dominating set} (where $S$ is a \emph{total dominating set} of $G$ if every vertex of $G$ is adjacent to some vertex in $S$). We remark that the only difference between a locating set and a locating-dominating set in $G$ is that a locating set might have a unique non-dominated vertex.

An \emph{independent set} in $G$ is a set of vertices no two of which are adjacent. The \emph{independence number} of $G$, denoted $\alpha(G)$, is the maximum cardinality of an independent set of vertices in $G$. The complement of an independent set in $G$ is a \emph{vertex cover} in $G$. Thus if $S$ is a vertex cover in $G$, then every edge of $G$ is incident with at least one vertex in $S$.
%

A \emph{clique} in $G$ is a set of vertices that induce a complete subgraph.
Given a graph $G$, the \emph{line graph} $\cL(G)$ of $G$ is the graph with vertex set $E(G)$, and where two vertices of $\cL(G)$ are adjacent if and only if the two corresponding edges share an end in $G$. A graph is a \emph{line graph} if it is the line graph of some other graph. Line graphs form an important subclass of claw-free graphs.

Two different edges are \emph{neighbors} if they are adjacent. Given an edge $e$ in a graph $G$, let $N_G(e)$ be the set of edges that are neighbors of $e$. We define $N_G[e] = N_G(e) \cup \{e\}$. If $G$ is clear from the context, we simply write $N_G[e]$ and $N_G(e)$ by $N[e]$ and $N(e)$, respectively.
Let $D$ be a subset of edges in $G$. Two distinct edges $e$ and $f$ in $E(G) \setminus D$ are \emph{located} by $D$ if they have distinct neighbors in $D$; that is, $N(e) \cap D \ne N(f) \cap D$. If an edge $e \in E(G) \setminus D$ is located from every other edge in $E(G)\setminus D$, we simply say that $e$ is \emph{located} by $D$.

We introduce the concept of an \emph{edge-locating-dominating set}, in the flavor of an edge-dominating set. An \emph{edge-dominating set} in a graph $G$ is a set $D$ of edges of $G$ such that every edge in $E(G) \setminus D$ is adjacent to an edge in $D$, while an \emph{edge-total-dominating set} in a graph $G$ is a set $D$ of edges of $G$ such that every edge in $E(G)$ is adjacent to an edge in $D$. The related concept of \emph{edge-identifying code} was studied in~\cite{lineID,JT13}.

Let $D$ be a subset of edges of a graph $G$. The set $D$ is an \emph{edge-locating-dominating set} if $D$ is an edge-dominating set of $G$ and every pair of edges in $E(G) \setminus D$ is located by $D$, while the set $D$ is an \emph{edge-locating-total-dominating set}, abbreviated ELTD-set, of $G$ if $D$ is an edge-total-dominating set of $G$ and every pair of edges in $E(G) \setminus D$ is located by $D$. The \emph{edge-location domination number}, denoted $\gLe(G)$, and the \emph{edge-location total domination number}, denoted $\gLTe(G)$, of $G$ is the minimum cardinality of an edge-locating-dominating set and edge-locating-total-dominating set of $G$, respectively.

An edge-dominating set $D$ of graph $G$ is a \emph{weak edge-locating-dominating set}, abbreviated WELD-set, if for every pair $e,f$ of edges in $E(G) \setminus D$ that are not edge-twins, $N(e) \cap D \neq N(f) \cap D$. The \emph{weak edge-location-domination number}, denoted $\gwLe(G)$, of $G$ is the minimum cardinality of a WELD-set of $G$.
%


We use the standard notation $[k] = \{1,2,\ldots,k\}$.


\noindent\textbf{Known Results.} Conjecture~\ref{conj-LD} remains open, although it was proved for a number of important graph classes.

\begin{unnumbered}{Theorem (\cite{cubic,Heia,conjpaper})}
The statement of Conjecture~\ref{conj-LD} is true if the twin-free graph $G$ of order~$n$ (without isolated vertices) satisfies any of the following conditions. \\
\indent {\rm (a)} {\rm (\cite{conjpaper})} $G$ has no $4$-cycles. \\
\indent {\rm (b)} {\rm (\cite{conjpaper})} $G$ has independence number at least $\frac{n}{2}$. \\
\indent {\rm (c)} {\rm (\cite{conjpaper})} $G$ has clique number at least $\lceil\frac{n}{2}\rceil+1$.\\
\indent {\rm (d)} {\rm (\cite{Heia})} $G$ is a split graph.\\
\indent {\rm (e)} {\rm (\cite{Heia})} $G$ is a co-bipartite graph. \\
\indent {\rm (f)} {\rm (\cite{cubic})} $G$ is a cubic graph.
\end{unnumbered}

Conjecture~\ref{conj-LTD} also remains wide open, although it was proved for graphs with no $4$-cycles. The conjecture was also shown to hold asymptotically for large minimum degree.

\begin{unnumbered}{Theorem (\cite{conjLTDpaper})}
The statement of Conjecture~\ref{conj-LTD} is true if the twin-free graph $G$ of order~$n$ (without isolated vertices) satisfies any of the following conditions. \\
\indent {\rm (a)} $G$ has no $4$-cycles. \\
\indent {\rm (b)} $G$ has minimum degree at least~$26$ and moreover, either: \\
\hspace*{1cm} {\rm (i)} has independence number at least $\frac{n}{2}$, \\
\hspace*{1cm} {\rm (ii)} has clique number at least $\lceil\frac{n}{2}\rceil+1$, \\
\hspace*{1cm} {\rm (iii)} is a split graph, or \\
\hspace*{1cm} {\rm (iv)} is a co-bipartite graph.
\end{unnumbered}

\noindent\textbf{Edge-Twins.} Two distinct vertices $u$ and $v$ of a graph $G$ are \emph{open twins} if $N(u)=N(v)$ and \emph{closed twins} if $N[u]=N[v]$. Recall that $u$ and $v$ are \emph{twins} in $G$ if they are open twins or closed twins in $G$, and that a graph is \emph{twin-free} if it has no twins.

Two edges $e$ and $f$ of $G$ are \emph{open edge-twins} if $N(e)=N(f)$; they are \emph{closed edge-twins} if $N[e]=N[f]$. Further, $e$ and $f$ are \emph{edge-twins} of $G$ if they are open edge-twins or closed edge-twins of $G$. A graph is \emph{edge-twin-free} if it has no edge-twins.
The \emph{paw graph}, which we denote by $K_3^+$, is the graph obtained by adding a pendant edge to a $K_3$. We denote the graph $K_4$ minus one edge by $K_4 - e$, where $e$ denotes an edge of the $K_4$. We shall need the following properties of edge-twins.

\begin{ob}
If $G$ is a connected graph with edge-twins, then the following properties hold. \\[-25pt]
\begin{enumerate}
\item A pair of open edge-twins in $G$ have no end in common, while a pair of closed edge-twins in $G$ have an end in common.
\item If $G$ contains a pair of open edge-twins, then $G$ is isomorphic to one of $P_4$, $C_4$, $K_3^+$, $K_4 - e$ or $K_4$.
\item If $G$ contains a pair of closed edge-twins $e$ and $f$, then $e$ and $f$ have an end in common, say the vertex~$v$. Further, if $e = uv$ and $f = vw$, then every edge adjacent to $e$ or $f$ is either the edge $uw$ or is incident with the vertex $v$. In particular, $u$ and $w$ both have degree~$1$ or both have degree~$2$. We call $u$ and $w$ the non-shared ends of the closed edge-twins $e$ and $f$.
\item An edge cannot have both an open edge-twin and a closed edge-twin.
\item An edge has at most one open edge-twin.
\item Let an edge $e$ have a closed edge-twin $f$. If the non-shared ends of $e$ and $f$ have degree~$2$, then $f$ in the unique closed edge-twin of $f$, while if the non-shared ends of $e$ and $f$ have degree~$1$, then it is possible for $e$ to have any number $k \ge 0$ of closed edge-twins in addition to $f$.
\end{enumerate}
\label{Ob:twins}
\end{ob}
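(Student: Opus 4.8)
The plan is to reduce every part to one elementary fact: for an edge $e = xy$, its edge-neighborhood $N(e)$ is exactly the set of edges incident with $x$ or $y$ other than $e$ itself. Part (1) follows immediately from this: if open edge-twins $e$ and $f$ shared an end they would be adjacent, forcing $f \in N(e) = N(f)$, which is absurd since $f \notin N(f)$; and for closed edge-twins, $e \in N[e] = N[f]$ together with $e \ne f$ gives $e \in N(f)$, that is, a common end. For Part (2) I would write the open edge-twins as $e = ab$ and $f = cd$, which are vertex-disjoint by (1), and observe that any edge at $a$ other than $e$ lies in $N(e) = N(f)$ and is therefore incident with $c$ or $d$. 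Running this over $a,b,c,d$ forces every edge of $G$ either to join $\{a,b\}$ to $\{c,d\}$ or to be $e$ or $f$, so by connectivity $V(G) = \{a,b,c,d\}$ and $G$ is a connected spanning subgraph of $K_4$ containing the disjoint edges $ab$ and $cd$. Enumerating by the number of the four cross-edges $ac,ad,bc,bd$ present then yields exactly the five graphs: zero cross-edges is disconnected and excluded, one gives $P_4$, two give $C_4$ or $K_3^+$ according as the two cross-edges do or do not form a matching, three gives $K_4 - e$, and four gives $K_4$.

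For Part (3) I would again use the neighborhood description. By (1) the closed edge-twins share an end, say $e = uv$ and $f = vw$ with $u \ne w$ (since $e \ne f$). The key step is that an edge incident with $u$ but not $v$ belongs to $N[e] = N[f]$, is different from $f$, and is hence incident with $v$ or $w$; being incident with $u$ but not $v$, it must be $uw$. The symmetric statement holds at $w$, so every edge adjacent to $e$ or $f$ is incident with $v$ or equals $uw$, as claimed. In particular the only possible edge at $u$ besides $e$ is $uw$, and likewise at $w$ besides $f$; thus $d(u)$ and $d(w)$ are each $1$ or $2$, and both equal $1$ when $uw \notin E(G)$ and both equal $2$ when $uw \in E(G)$.

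Parts (4) and (5) are short. For (4), suppose $e$ has an open edge-twin $f$ and a closed edge-twin $g$. Since $g \in N[g] = N[e]$ and $g \ne e$, we have $g \in N(e) = N(f)$, so $g$ is adjacent to $f$; on the other hand $e$ and $f$ are vertex-disjoint, so $f \notin N(e)$, whence $f \notin N(e) \cup \{e\} = N[e] = N[g]$, which says $f$ is not adjacent to $g$ --- a contradiction. For (5), if $e = ab$ has an open edge-twin then (2) gives $V(G) = \{a,b,c,d\}$, and the only edge of $G$ disjoint from $e$ is $cd$; since any open edge-twin of $e$ must avoid both ends of $e$, it can only be $cd$, proving uniqueness.

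Part (6) is where the bookkeeping must be done most carefully, and it is the step I expect to be the main obstacle. Writing $e = uv$, $f = vw$ as in (3) and letting $E_v$ denote the set of edges incident with $v$, I would compute $N[e]$ explicitly in the two cases. When $u$ and $w$ have degree $1$ (so $uw \notin E(G)$), $N[e] = E_v$, and an edge $vx \ne e$ is a closed edge-twin of $e$ exactly when $x$ is a leaf; hence attaching $v$ to $k+2$ leaves realizes precisely $k$ closed edge-twins of $e$ beyond $f$, which establishes the ``any $k \ge 0$'' assertion. When $u$ and $w$ have degree $2$ (so the triangle $uvw$ is present and $N[e] = E_v \cup \{uw\}$), any closed edge-twin $g$ of $e$ must satisfy $uw \in N[g]$; an edge $vx$ with $x \notin \{u,w\}$ cannot contain $uw$ in its closed neighborhood, and the edge $uw$ itself qualifies only when $E_v = \{e,f\}$. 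Thus as soon as $d(v) \ge 3$ the edge $f$ is the unique closed edge-twin of $e$. The one boundary case is $d(v) = 2$, which together with $d(u) = d(w) = 2$ forces $G \cong K_3$, where all three edges are mutually closed edge-twins; this is the sole exception to uniqueness and I would flag it explicitly. Getting the triangle case right, and ensuring the degree-$2$ analysis is airtight, is the crux of the argument.
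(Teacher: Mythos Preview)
The paper states this as an \emph{observation} and supplies no proof whatsoever, so there is nothing to compare your argument against. Your proof is correct and complete: the reduction of every part to the elementary description of $N(e)$ is exactly the right mechanism, and your case analyses in (2), (3) and (6) are clean.

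Your detection of the $K_3$ boundary case in Part~(6) is in fact sharper than the paper's own formulation. As stated (modulo the evident typo ``$f$ in the unique'' for ``$f$ is the unique''), the claim that $f$ is the unique closed edge-twin of $e$ when the non-shared ends have degree~$2$ fails precisely for $G\cong K_3$, where all three edges are mutually closed edge-twins. Your analysis pinpoints this: uniqueness holds as soon as $d(v)\ge 3$, and $d(v)=2$ together with $d(u)=d(w)=2$ forces $K_3$. The paper never uses Part~(6) in a setting where $G$ could be a bare triangle, so the oversight is harmless there, but you are right to flag it.
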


\noindent\textbf{Our Results.} We prove both Conjectures~\ref{conj-LD} and~\ref{conj-LTD} for the special case of line graphs in Sections~\ref{sec:LD} and~\ref{sec:LTD}, respectively. Moreover, in each section we also discuss examples that are extremal with respect to the conjectured bounds.

\section{Locating-dominating sets}\label{sec:LD}

In this section, we prove Conjecture~\ref{conj-LD} for line graphs. For this purpose, we shall need the following key result about edge-location-domination in graphs.

\begin{thm}\label{thm:eLD} Every graph on $m$ edges and without isolated edges has a weak edge-locating-dominating set of size at most $\frac{m}{2}$.
\end{thm}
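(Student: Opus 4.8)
The plan is to work with the equivalent statement on the line graph. Since the edges of $G$ are the vertices of $\cL(G)$, adjacency of edges becomes adjacency in $\cL(G)$, and open (resp.\ closed) edge-twins of $G$ become open (resp.\ closed) twins of $\cL(G)$, a WELD-set of $G$ is exactly a dominating set of $\cL(G)$ that locates every pair of \emph{non-twin} vertices lying outside it; the hypothesis that $G$ has no isolated edge says precisely that $\cL(G)$ has no isolated vertex. I would nevertheless run the argument directly on the edges of $G$, so as to invoke Observation~\ref{Ob:twins}. First I would reduce to the connected case. If $G$ has components $G_1,\dots,G_k$ (ignoring isolated vertices), then each $G_i$ has at least two edges, so by induction each $G_i$ admits a WELD-set $D_i$ with $|D_i|\le |E(G_i)|/2$, and I claim $D=\bigcup_i D_i$ is a WELD-set of $G$. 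The only nontrivial point is that a non-twin pair $e,f$ in distinct components is located; but edge-domination forces $N(e)\cap D$ and $N(f)\cap D$ to be nonempty, and since they lie in different components they are distinct. Summing gives $|D|\le m/2$.

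For connected $G$ I would use strong induction on $m$, with $P_3$ (where $|D|=1=m/2$) as base case. Here the \emph{weak} condition is what makes the induction feasible: it lets me ignore exactly the edge-twin pairs, whose structure is pinned down by Observation~\ref{Ob:twins}. If $G$ contains a pair of open edge-twins, then by part~2 of the Observation $G$ is one of the finitely many graphs $P_4, C_4, K_3^+, K_4-e, K_4$, which I would dispose of individually, exhibiting in each a WELD-set of size at most $m/2$. The substantial case is therefore that every edge-twin pair is a \emph{closed} pair; by part~3 such a pair shares a vertex $v$ and has non-shared ends of equal degree, a strictly local configuration that I can target for reduction.

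The engine is a local reduction. I would select a suitable bounded configuration — for instance the edges around a vertex $v$ of maximum distance from a root in a BFS tree (so that the neighbors of $v$ are confined to the last two levels, constraining the local structure), or the edges around a closed-edge-twin pair — delete a set $S$ of $k$ edges to obtain a smaller graph $G'$ still without isolated edges, apply induction to get a WELD-set $D'$ of $G'$ with $|D'|\le (m-k)/2$, and re-extend $D'$ to a WELD-set $D$ of $G$ by adding at most $j$ edges. The entire scheme closes precisely when the reduction is \emph{efficient}, i.e.\ $j\le k/2$, since then $|D|=|D'|+j\le (m-k)/2+k/2=m/2$. Intuitively, absorbing a pendant edge or a degree-$2$ vertex, or collapsing a closed-edge-twin configuration, should cost one new edge in $D$ for every two edges deleted.

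The part I expect to be genuinely hard is not the counting but re-verifying the two defining conditions after gluing $D'$ back, because location is a \emph{global} constraint: edges of $G'$ that $D'$ successfully separated may become confused either with the re-added edges of $S$ or with edges incident to the deleted part that are newly dominated in $G$, and deleting $S$ may create new edge-twins in $G'$ (harmless, since the weak condition discards them) or, dangerously, isolated edges (which I must preclude by choosing $S$ carefully). Thus the crux is to engineer the local reduction so that simultaneously (i) $G'$ has no isolated edge, (ii) every pair that the inductive set failed to separate is, in $G$, either located by the freshly added edges or is itself an edge-twin pair, and (iii) the efficiency bound $j\le k/2$ holds; the case analysis realizing all three is the main body of work, organized by the local degree of the chosen vertex and by whether the closed-edge-twin configuration of Observation~\ref{Ob:twins} is present.
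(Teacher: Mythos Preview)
Your framework is correct and matches the paper's own approach almost exactly: minimal counterexample (equivalently, strong induction on $m$), reduction to connected $G$, disposal of open edge-twins via Observation~\ref{Ob:twins}(b), and then local reductions $G\mapsto G'=G-S$ followed by re-extension $D'\mapsto D=D'\cup X$ with $|X|\le |S|/2$. You have also correctly isolated the two verification tasks the paper calls conditions~(i) and~(ii): locate every edge of $S\setminus D$ against every other edge outside $D$, and re-locate every pair that was an edge-twin pair in $G'$ but is \emph{not} one in $G$. Your remark that the ``weak'' relaxation is precisely what makes the induction survive the creation of new edge-twins in $G'$ is the key structural insight.

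Where your plan is thin is in the choice of reductions. The BFS-deepest-vertex idea you sketch is exactly how the paper handles the acyclic case (Claim~\ref{clm:cycle}), but by itself it does not close the argument once $G$ has cycles: removing a pendant $P_3$ from a leaf of a BFS tree need not exist, and removing two edges at a degree-$2$ vertex on a cycle runs into delicate location failures. The paper's organization is not by local degree at a BFS leaf but by \emph{progressive elimination of short-cycle structure in a minimal counterexample}: first it shows $G$ is edge-twin-free (your closed-edge-twin reduction, but already this needs a careful sub-argument about a uniquely offending edge $f'$), then that $G$ is not a tree, then that $G$ has no $K_4$, then (via a long technical claim about removing two sides of a triangle) that no triangle has a degree-$2$ vertex, then no diamond, then no triangle at all, then no $4$-cycle, then no odd cycle; finally a shortest even cycle of length $\ge 6$ is removed together with its pendant isolated edges and an alternating half of its edges is added back. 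Each step is a specific choice of $S$ tailored to the structure just exhibited, and several of them require secondary reductions ($G'',G''',\ldots$) when the first choice of $D$ fails~(i) or~(ii). So your scheme is right, but the ``main body of work'' you allude to is not a single local gadget around a BFS leaf or a closed-edge-twin pair; it is this chain of structural eliminations on cycles, and that is where the real difficulty lies.
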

\begin{proof}[Proof of Theorem~\ref{thm:eLD}] Suppose, to the contrary, that the statement is false. Among all counterexamples, let $G$ be one of minimum size~$m \ge 2$. Thus, $G$ is a graph on $m$ edges and without isolated edges satisfying $\gwLe(G) > \frac{m}{2}$. However, every graph $G'$ on $m'$ edges, where $m' < m$, and without isolated edges satisfies $\gwLe(G') \le \frac{m'}{2}$. The statement of the theorem is clearly true for every such graph with two or three edges, namely for the graphs $P_3$, $K_{1,3}$, $P_4$, and $C_3$. Hence, $m \ge 4$. In order to prove some structural properties of $G$, we will remove a selected set $S$ of edges from $G$ to build a subgraph $G'$ of $G$ of size~$m' < m$ with no isolated edge. By the minimality of $G$, we can consider a WELD-set $D'$ of $G'$ of size at most $m'/2$. The idea will be to extend the set $D'$ to a WELD-set $D$ of $G$ by adding to it at most $|S|/2$ edges. To do so, it is sufficient to show that:

(i) every edge of $S$ that is not in $D$ is located from any other edge of $E(G)\setminus D$, and that\\
\indent (ii) every pair of edges in $E(G) \setminus D$ that are edge-twins in $G'$ but not in $G$, are located by $D$.

We now prove a series of claims on the structure of $G$.

\medskip
\begin{clm}\label{clm:conn}
$G$ is connected.
\end{clm}
\claimproof If $G$ is not connected, we may apply the minimality of $G$ to each of its components to show that $\gwLe(G) \le \frac{m}{2}$, contradicting the fact that $G$ is a counterexample.~\smallqed

\medskip
\begin{clm}\label{clm:etf}
$G$ is edge-twin-free.
\end{clm}
\claimproof
We show next that $G$ has no open edge-twins. Suppose, to the contrary, that $G$ has a pair of open edge-twins, $e$ and $f$ say. Thus, $N(e) = N(f)$ and $e$ and $f$ have no end in common. Further, every edge adjacent with $e$ is adjacent with $f$, and conversely. This implies that $G$ has order~$4$. Since $G$ has size~$m \ge 4$, either $G \cong C_4$ or $G \cong K_4$ or $G \cong K_4 - e$, where $e$ denotes an edge of the $K_4$, or $G$ is obtained from a $3$-cycle by adding a pendant edge. If $G \cong K_4$, then $\gwLe(G) = 3 = \frac{m}{2}$, while if the other three cases,  $\gwLe(G) = 2 \le \frac{m}{2}$. This contradicts the fact that $G$ is a counterexample. Therefore, $G$ has no open edge-twins.

We show finally that $G$ has no closed edge-twins. Suppose, to the contrary, that $G$ has a pair of closed edge-twins, $e$ and $f$ say. Thus, $N[e] = N[f]$ and $e$ and $f$ have an end in common, say the vertex~$v$. Let $e = uv$ and $f = vw$. If $h$ is an edge adjacent to $e$ or $f$, then either $h = uw$ or $h$ is incident with the vertex $v$. Let $G' = G - \{u,w\}$. By Claim~\ref{clm:conn}, the graph $G$ is connected, and therefore so too is $G'$.

Suppose that $h = uw$ is an edge of $G$, and so $vuwv$ is a triangle in $G$ and $G'$ has size~$m' = m - 3$. Every other edge adjacent to $e$ or $f$ is incident with the vertex $v$. In particular, $d_G(u) = d_G(w) = 2$. Since $G$ has no open edge-twins, we note that $G$ has order~$n \ge 5$. Thus, $G'$ has no isolated edge. Let $D'$ be a minimum WELD-set in $G'$. By the minimality of $G$, $|D'| = \gwLe(G') \le m'/2 = (m-3)/2$. The set $D' \cup \{h\}$ is a WELD-set in $G$, and so $\gwLe(G) \le |D'| + 1 < m/2$, a contradiction.

Thus, $uw$ is not an edge of $G$, implying that both $u$ and $w$ have degree~$1$ in $G$, and $G'$ has size~$m' = m - 2$. Every edge adjacent to $e$ or $f$ is incident with the vertex $v$. Since $m \ge 4$, $G'$ has no isolated edge. Let $D'$ be a minimum WELD-set in $G'$. By the minimality of $G$, $|D'| = \gwLe(G') \le m'/2 = (m-2)/2$. If no edge incident with the vertex $v$ in $G'$ belongs to the set $D'$, then $D' \cup \{e\}$ is a WELD-set in $G$, and so $\gwLe(G) \le |D'| + 1 \le m/2$, a contradiction. Therefore, there is an edge $e'$, say, incident with $v$ that belongs to the set $D'$. If the set $D'$ is a WELD-set of $G$, then $\gwLe(G) \le |D'| < m/2$, a contradiction. Therefore, the set $D'$ is a WELD-set of $G'$ but not of $G$.

Since $D'$ is not a WELD-set of $G'$ and since $D'$ contains at least one edge incident with $v$, namely the edge $e'$, this implies that there must exist an edge $f'$ incident with $v$ in $G'$ such that (a) $f' \notin D'$, (b) $f'$ is only adjacent to edges of $D'$ that are incident with $v$, and (c) $f'$ is adjacent to an edge that is not incident with the vertex~$v$. Thus, in the graph $G$, the edges $e$ and $f'$ are not (closed) twins and they are not located by $D'$. If there exists another edge, $f''$ say, that also satisfies (a), (b) and (c), then $f'$ and $f''$ would be closed twins in $G'$. Further, letting $f' = vv'$ and $f'' = vv''$, we note that $v'v''$ is an edge. However, such an edge is not dominated by $D'$, a contradiction. Therefore, the edge $f'$ is  unique. Thus the set $D' \cup \{f'\}$ is a WELD-set in $G$, and so $\gwLe(G) \le |D'| + 1 \le m/2$, a contradiction.~\smallqed 

\medskip
\begin{clm}\label{clm:cycle}
$G$ has a cycle.
\end{clm}
\claimproof For the sake of contradiction, suppose that $G$ is a tree. Consider a longest path in $G$, say from vertex $r$ to vertex $u$, and root the tree at $r$. Let $v$ be the parent of $u$, and let $w$ be the parent of $v$. Since by Claim~\ref{clm:etf} $G$ is edge-twin-free, we have $d(v)=2$. Let $S=\{uv,vw\}$ and let $G'=G-S$. Since $G$ is a connected graph of size at least~$4$ and since $d(v)=2$, the graph $G'$ has no isolated edge. By the minimality of $G$, $\gwLe(G') \le \frac{m'}{2} = \frac{m}{2}-1$. Let $D'$ be a minimum WELD-set of $G'$. We claim that $D'\cup\{vw\}$ is a WELD-set of $G$. Indeed, every edge in $G'$ is dominated by some edge of $D'$, hence $uv$ is the only edge of $V(G)\setminus D$ dominated only by $vw$ and (i) is satisfied. Moreover, if there were any edge-twins in $G'$ that are no longer edge-twins in $G$, these edge-twins would now be located by $vw$, proving (ii). Hence, $G$ is not a counterexample, a contradiction. \smallqed

\medskip
\begin{clm}\label{clm:no-K4}
$G$ has no $K_4$ as a subgraph.
\end{clm}
\claimproof Suppose, to the contrary, that there is a $K_4$-subgraph, $K$ say, of $G$ on vertices $x,y,z,t$. We remove from $G$ all edges of $K$, as well as additional edges, if any, that would be isolated in $G-E(K)$, and call the resulting graph $G'$. By the minimality of $G$, $\gwLe(G') \le |E(G')|/2$. Let $D'$ be a minimum WELD-set of $G'$. We let $D=D'\cup\{xy,xz,xt\}$ and claim that $D$ is a WELD-set of $G$. Indeed, it is clear that all edges of $E(G) \setminus E(G')$ are located: every edge of $E(K) \setminus D$ is uniquely determined by a pair of edges of $E(K)\cap D$, and every edge that would have been isolated in $G-E(K)$ is the only edge in $E(G)\setminus D$ dominated either by all of $xy,xz,xt$ or by exactly one of them. Hence, $D$ satisfies condition~(i). Moreover any pair of edge-twins of $G'$ that are no longer edge-twins in $G$ would be located by some edge in $E(K)\cap D$. Hence $G$ is not a counterexample, a contradiction. \smallqed

\medskip
\begin{clm}\label{clm:T}
If $u,v,w$ induce a triangle in $G$ and $G'=G-\{uv,vw\}$ has no
isolated edge, then every WELD-set of $G'$ of size at most $\frac{|E(G')|}{2}$ does not contain the edge~$uw$.
\end{clm}
\claimproof
Let $D'$ be a WELD-set of $G'$ of size at most $|E(G')|/2$ and suppose, to the contrary, that $uw\in D'$. Let $D_1=D'\cup\{uv\}$. If $D_1$ satisfies both (i) and (ii), then $G$ is not a counterexample, a contradiction. Hence, (i) or (ii) are not satisfied by $D_1$.
Suppose that there were two edge-twins $e,e'$ in $G'$ that are no longer edge-twins in $G$, which means one of them, say $e$, is adjacent to at least one of $uv$ and $vw$. If the edge $e$ is incident with $v$, then $e$ is not adjacent to the edge $uw$. Thus, since $e$ and $e'$ are edge-twins in $G'$, the edge $e'$ is not adjacent to $uw$, implying that the edges $e$ and $e'$ are located by $uv$ and therefore by $D$. Analogously, if the edge $e$ is incident with $u$ (respectively, $w$), then $e'$ is incident with $w$ (respectively, $u$), implying that $e$ and $e'$ are located by $uv$ and therefore by $D_1$. Hence, (ii) is satisfied by $D_1$. Therefore, (i) is not satisfied by $D_1$.

Since (i) is not satisfied by $D_1$, there is an edge $e\notin D_1$ with $N(e) \cap D_1 = N(vw) \cap D_1$. In particular, $\{uv,uw\} \subseteq N(e)$, implying that $e$ is incident with $u$. Repeating the same argument with $D_2 = D' \cup \{vw\}$, (ii) is satisfied by $D_2$ and (i) is not satisfied by $D_2$, which implies the existence of an edge $e'$ incident with $w$ satisfying $N(e') \cap D_2 = N(uv) \cap D_2$.

Let $e = ux$, and note that $x \notin \{u,v,w\}$. We show that $N(e) \cap D_1=\{uv,uw\}$. Suppose, to the contrary, that the edge $e$ is dominated by some edge $f \in D_1$ different from $uv$ and $uw$. Since the edge $vw$ must also dominated by $f$, either $f = vx$ or $f = wx$.
Suppose firstly that $f = vx$. If $wx \in E(G)$, then $G[\{u,v,w,x\}] \cong K_4$, contradicting Claim~\ref{clm:no-K4}. Therefore, $wx \notin E(G)$. In this case, the edge $vx \in D'$ locates the edges $uv$ and $e'$ with respect to the set $D_2$, and so (i) is satisfied by $D_2$, a contradiction.
Suppose secondly that $f = wx$. In this case, by Claim~\ref{clm:no-K4}, $vx \notin E(G)$ and the edge $f \in D'$ locates the edges $uv$ and $e'$ with respect to the set $D_2$, and so (i) is satisfied by $D_2$, a contradiction.
Since both cases produce a contradiction, we deduce that $N(e)\cap D_1=\{uv,uw\}$ and that the edge $e$ was only dominated by $uw$ in $D'$. Analogously, $N(e') \cap D_2 = \{vw,uw\}$ and the edge $e'$ was only dominated by $uw$ in $D'$. This means that $e$ and $e'$ had to be edge-twins in $G'$. We proceed further with the following subclaim.

\begin{unnumbered}{Claim~5.E.1}
The edges $e$ and $e'$ are closed edge-twins in $G'$.
\end{unnumbered}
\claimproof Suppose, to the contrary, that $e$ and $e'$ are open edge-twins in $G'$. Let $e = uu'$ and $e' = ww'$. By Observation~\ref{Ob:twins}(a), $u' \ne w'$. Suppose there is an edge $f$, different from $uw$, that is adjacent to both $e$ and $e'$. Then, $f \in \{u'w',uw',wu'\}$.  Recall that the edge $e$ (respectively, $e'$) is only dominated by $uw$ in $D'$. If $f = u'w'$, then $f$ is not be dominated by $D'$, a contradiction. If $f \in \{uw',wu'\}$, then $N(f)\cap D' = N(e)\cap D'=\{uw\}$, a contradiction since $e$ and $f$ are not edge-twins in $G'$. Therefore, $uw$ is the only edge adjacent to both $e$ and $e'$. Moreover there is no other edge incident with $u$ or $w$, since $e$ and $e'$ are edge-twins in $G'$. Hence, the component of $G'$ containing $uw$ only contains the edges $uw$, $e$ and $e'$.

If $E(G) = \{uv,vw,uw,e,e'\}$, then the set $\{uv,vw\}$ is an edge-locating-dominating set in $G$, implying that $m=5$ and that $G$ has a WELD-set of size less than~$m/2$, a contradiction. Hence, since $G'$ has no isolated edge, the component of $G'$ containing the vertex $v$ has size at least~$2$. We now consider the graph $G''=G-\{uv,vw,uw,e,e'\}$. We note that since $G'$ has no isolated edge, neither does $G''$. By the minimality of $G$, $\gwLe(G'') \le |E(G'')|/2$. Let $D''$ be a minimum WELD-set of $G''$ and let $D_3=D''\cup\{uv,vw\}$. The edge $e$ is the only edge dominated solely by $uv$, and the edge $e'$ is the only edge dominated solely by $vw$. The edge $uw$ is dominated by both $uv$ and $vw$, and if there were some other edge dominated only by both $uv$ and $vw$, it would not have been dominated by $D''$, a contradiction. Hence, (i) is satisfied by $D_3$. Moreover, (ii) is also satisfied because for any pair of edge-twins of $G''$ that are no longer edge-twins in $G$, exactly one of them would be incident with $v$ and hence they would be located by $uv$ and $vw$. Thus, $D_3$ satisfies both (i) and (ii), implying that $G$ is not a counterexample, a contradiction.~ \smallqed

\medskip
By Claim~5.E.1, the edges $e$ and $e'$ are closed edge-twins. Let $x$ be the common vertex incident with both $e$ and $e'$ (and so, $uwxu$ is a $3$-cycle in $G$). By the same arguments as in the previous paragraph, we obtain that $d_G(u)=d_G(w)=3$, and that no edge incident with $x$ is in $D'$. Let $G'''=G-\{e,e',uv,vw,uw\}$. If $G'''$ has an isolated edge $e^*$, then $e^*$ would be incident with $v$ or with $x$ but not to both since by Claim~\ref{clm:no-K4}, $G$ has no $K_4$-subgraph. If $e^*$ is incident with $v$, then $e^*$ would be an isolated edge in $G'$; iff $e^*$ is incident with $x$, then $e^*$ would not have been dominated by $D'$ in $G'$. Both cases produce a contradiction. Hence, $G'''$ has no isolated edge.

By the minimality of $G$, $\gwLe(G''') \le |E(G''')|/2 = (m-5)/2$. Let $D'''$ be a minimum WELD-set of~$G'''$. If every pair of edge-twins of $G'''$ is also a pair of edge-twins of $G$, then we let $D_4 = D'''\cup\{uv,vw\}$. Then, (ii) is trivially satisfied by $D_4$, and by the same arguments as for $D_3$ in the proof of Claim~5.E.1, (i) is also satisfied by $D_4$, implying that $G$ is not a counterexample, a contradiction. Hence, there is a pair of edge-twins of $G'''$ that is not a pair of edge-twins in $G$. If there is no such edge pair with one edge incident with $x$, we consider $D_5=D'''\cup\{uv,vw\}$, which is a WELD-set of $G$, implying that $G$ is not a counterexample, a contradiction. Analogously, if there is no such edge pair with one edge incident with $v$, we consider $D_6=D'''\cup\{e,e'\}$, which is a WELD-set of $G$, implying that $G$ is not a counterexample, a contradiction. Hence, there must have been a pair $f,f'$ of edge-twins in $G'''$ with $f$ (but not $f'$) incident with $v$, and such a pair $g,g'$ with $g$ (but not $g'$) incident with $x$.

We now consider the graph $G'''' = G - \{e,e',uv,vw,uw,f,g\}$. Suppose that $G''''$ has an isolated edge, $e^*$. If $e^*$ is incident with $x$ or $v$, then we contradict the fact that $f,f'$ and $g,g'$ are edge-twins in $G'''$. Hence, $e^* \in \{f',g'\}$. By symmetry, we may assume that $e^* = f'$. Then, the only edge adjacent to $f'$ is $f$, that is, $f$ and $f'$ are closed edge-twins with a common end. Let $f = vv_1$ and $f' = v_1v_2$. Thus, $vv_1v_2$ is a path in $G$, where $d_G(v_2) = 1$ and $d_G(v_1) = 2$. Further, $d_G(v) = 3$ and $N_G(v) = \{u,v_1,w\}$. We now consider the graph $G^* = G - \{uv,vw,uw,e,e',f,f'\}$. We note that $u, v, v_1, v_2$ and $w$ are all isolated vertices in $G^*$. Since $G^*$ has no isolated edge, we apply the edge-minimality to $G^*$ and obtain a WELD-set $D^*$ of $G^*$ of size at most $(m-7)/2$, and let $D_7 = D^* \cup \{f,uv,uw,wx\}$. Both (i) and (ii) are satisfied by $D_7$, implying that $D_7$ is a WELD-set of $G$ and that $G$ is not a counterexample, a contradiction. Therefore, $G''''$ has no isolated edge.

Applying the edge-minimality to $G''''$, we obtain a WELD-set $D''''$ of $G''''$ of size at most $(m-7)/2$, and let $D_8 = D''''\cup\{uv,vw,g\}$. By similar arguments as above, (i) is satisfied by $D_8$. Assuming (ii) is not satisfied by $D_8$ for some pair $h,h'$, then one of these edge-twins of $G''''$ must be adjacent to $f$ or~$g$.

Suppose that $h$ (but not $h'$) is adjacent to $f$. If $h$ is incident with $v$, we are done because $h,h'$ are located by $uv,vw$. Otherwise, since $f,f'$ were edge-twins in $G'''$, $f'$ is adjacent to $h$, and hence to $h'$ since $h,h'$ are edge-twins in $G''''$. Thus, $h,h',f'$ form a triangle. But then $h'$ cannot be adjacent to $f$ (otherwise $h$ and $h'$ are edge-twins of $G$), contradicting the fact that $f,f'$ were edge-twins in $G'''$. Therefore, one of the edge-twins, $h$ or $h'$, of $G''''$ must be adjacent to $g$.

Thus, suppose that $h$ (but not $h'$) is adjacent to $g$. Recall that the edge $g$ is incident with the vertex $x$ (assume $g=xy$), but the edge $g'$ is not incident with~$x$. If $h$ is incident with $x$, assume that $h=xz$. Then, since $g$ and $g'$ are edge-twins in $G'''$, $g'$ must be incident with $z$. Moreover, either $g'=h'$ and it is adjacent to $g$ (in which case $g,g',h$ form a triangle in $G$ and $g$ and $g'$ are closed edge-twins of $G'''$), or $g' \ne h'$ (in which case $g,g,g',h'$ form a $4$-cycle in $G$ and $g,g'$ and $h,h'$ are pairs of open edge-twins in $G'''$ and $G''''$, respectively). 
In the former case when $g'=h'$, no edge other than $e$ or $e'$ is adjacent to any of $g,g',h$. But then, $g$ and $h$ are edge-twins in $G$ itself, a contradiction to Claim~\ref{clm:etf}. 
In the latter case when $g' \ne h'$, we let $t$ be the common end of $g'$ and $h'$. The only possible additional edges that can be adjacent to $g,g',h$ or $h'$ in $G$ are the edges $xt$ and $yz$ (and at most one of them may exist, for otherwise $G$ contains a $K_4$, contradicting Claim~\ref{clm:no-K4}). By the choice of the pair $h,h'$, we know that $D_8$ does not locate $h$ and $h'$. Thus, none of these two edges belongs to $D''''$. Then, either none of $xt$ and $yz$ exists and $g'\in  D''''$, or one of $xt$ and $yz$ exists, in which case both this edge and $g'$ belong to $D''''$. In both cases, we could remove $g'$ from $D_8$ and replace it with $h$ to obtain $D_8'$. The resulting set $D_8'$ satisfies both (i) and (ii) and thus it is a WELD-set of $G$ of size at most $m/2$, a contradiction.
Therefore, none of $h,h'$ is incident with~$x$. Thus, $h$ is incident with the vertex~$y$. The pair $h,h'$ would be located by $g$ unless both $h,h'$ are incident with the vertex~$y$. But then $h,h'$ are edge-twins in $G$ itself, a contradiction to Claim~\ref{clm:etf}.

Therefore, we have proved that $D_8$ satisfies both (i) and (ii), implying that $D_8$ is a WELD-set of $G$ of size at most $m/2$ and that $G$ is not a counterexample, a contradiction. This completes the proof of the claim.~\smallqed

\medskip
\begin{clm}\label{clm:T-deg2}
No triangle of $G$ contains a vertex of degree~$2$.
\end{clm}
\claimproof Suppose, to the contrary, that $G$ contains a triangle $uvwu$ with $d_G(v)=2$. Let $G'=G-\{uv,vw\}$. Since $G$ is edge-twin-free and $d_G(v)=2$, we note that $d_G(u) \ge 3$ and $d_G(w) \ge 3$, implying that $G'$ has no isolated edge. Applying the edge-minimality to $G'$, there is a WELD-set $D'$ of $G'$ of size at most $\frac{m}{2}-1$.
By Claim~\ref{clm:T}, the edge $uw \notin D'$. In order to dominate the edge $uw$, we may assume, renaming $u$ and $w$ if necessary, that some edge $ux$ incident with $u$ belongs to $D'$. We now consider the set $D = D'\cup\{uv\}$. The edge $vw$ is the only edge dominated by $uv$ but not $ux$, hence (i) is satisfied by $D$. Moreover, if (ii) was not satisfied by $D$, we would have a pair, $e$, $e'$ of edge-twins in $G'$, at least one of which must be incident with $u$ or $w$.

Suppose that $e = uw$. If $e,e'$ are open edge-twins of $G'$, then $G'$ has order~$4$ and $G$ is either obtained from a triangle and a $4$-cycle by identifying one of their edges (potentially adding an edge between two opposite vertices of the $4$-cycle), or from a diamond by adding a leaf to a vertex of degree~$2$. But in either case, it is easily checked that $G$ has a WELD-set of size~$3$, a contradiction. Thus, assume that $e,e'$ are closed edge-twins in $G'$. If $e'$ is not incident with $u$, then $e' = xw$ and the pair $e,e'$ would be located by the edge $uv$ in $D$, a contradiction. Hence, $e'$ is incident with $u$. Let $e' = uy$. By Observation~\ref{Ob:twins}(c), the non-shared ends of $e$ and $e'$, namely $w$ and $y$, both have degree~$1$ or both have degree~$2$ in $G'$. Since $d_G(w)\geq 3$, $w$ and $y$ both have degree~$2$ in $G'$. Then, $wy$ is an edge. In this case, $wy \in D'$, for otherwise the edge $wy$ would not be dominated by $D'$ in $G'$. However, (i) and (ii) would now both be satisfied by the set $D' \cup \{vw\}$, implying that $G$ is not a counterexample, a contradiction.

Therefore, $e \ne uw$. Analogously,  $e' \ne uw$.
Moreover, the edge $ux$ is distinct from $e$ and from $e'$ since $ux\in D'$. This implies that if both $e$ and $e'$ are incident with $u$ or both incident with $w$, then $e, e'$ would be a pair of edge-twins in $G$, a contradiction. Therefore, exactly one of $e$ and $e'$ is incident with $u$ and the other with $w$. The pair $e,e'$ would therefore be located by the edge $uv$ in $D$, a contradiction.~\smallqed

\medskip
\begin{clm}\label{clm:diamond}
$G$ does not contain any diamond as a subgraph.
\end{clm}
\claimproof Suppose, to the contrary, that $G$ contains a diamond $M$. Let $V(M) = \{x,y,z,t\}$ where $ty$ is the missing edge in $M$. By Claim~\ref{clm:no-K4}, the edge $ty$ is not an edge of $G$. Consider the graph obtained from $G$ by removing the edges of $M$ and any resulting isolated edges, if any. Let $G'$ be the resulting subgraph. Applying the edge-minimality to $G'$, there is a WELD-set $D'$ of $G'$ of size at most $|E(G')|/2$.

Suppose that $G'$ was obtained by removing at least six edges from $G$. In this case, we let $D_1=D'\cup\{xy,xz,xt\}$. The edge $tz$ is the only edge dominated by both $xt$ and $xz$ (but not $xy$), while the edge $yz$ is the only edge dominated by both $xy$ and $xz$ (but not $xt$). Moreover any edge that would be isolated in $G-E(M)$ is solely dominated by either a single edge or by all three edges in $\{xy,xz,xt\}$, while every edge of $G'$ is dominated by a different set (notice that all edges of $G'$ are dominated by some edge of $D'$). Hence, $D_1$ fulfills (i). Moreover, any pair of edge-twins of $G'$ would be located by some edge that belongs to the set $\{xy,xz,xt\}$, and so $D_1$ satisfies (ii) as well, implying that $G$ is not a counterexample, a contradiction. Hence, $G'$ was obtained from $G$ by removing only the five edges of diamond $M$.

Suppose that $d_G(x)=d_G(z)=3$. In this case, we let $D_2=D'\cup\{xy,xt\}$. Every pair of edge-twins of $G'$ would be located by either $xy$ or $xt$, and so $D_2$ satisfies (ii). We show next that $D_2$ also satisfies~(i). If this is not the case, then renaming the vertices $t$ and $y$ if necessary, we may assume that the edge $zt$ is not located from some edge $e \in E(G) \setminus D_2$. The edge $e$ must be incident with $t$, and since $e$ was dominated by $D'$, there is an edge $f$ of $D'$ incident with $t$.

We now consider the set $D_3=D'\cup\{xz,xy\}$. Then,
the edge $yz$ is located by the edges $xy$ and $xz$, the edge $tz$ is located by the edges $f$ and $xz$, while the edge $xt$ is located by three edges $f$, $xy$ and $xz$  in $D_3$. Hence, $D_3$ satisfies (i).

If (ii) is not satisfied by $D_3$, there must be a pair of edge-twins of $G'$ with one of them incident with $t$: it must be $e$. Let $e'$ be its edge-twin in $G'$. If $e,e',f$ form a triangle, then the common end of $e'$ and $f$ would have degree~$2$ in $G$, contradicting Claim~\ref{clm:T-deg2}. Hence, $e,e',f$ induce a path on three edges with $f$ the central edge of the path. Let $v_e$ and $v_f$ be the end of the edge $e$ and $f$, respectively, different from $t$, and let $v'$ be the end of $e'$ different from $v_f$. Thus, $v'v_ftv_e$ is a path in $G$. If $v'v_e$ is an edge of $G$, then this edge would not be dominated by $D_3$. Hence, $v'v_e$ is not an edge of $G$. This in turn implies that $v_ev_f$ is not an edge, for otherwise, $v_e$ would have degree~$2$ in $G$ contradicting Claim~\ref{clm:T-deg2}. Hence, both $v'$ and $v_e$ have degree~$1$ in $G$, while $v_f$ has degree~$2$ in $G$. We now consider the graph $G^*$ obtained from $G$ by removing the edges of $M$ and removing the three edges $e'$, $f$ and $e$. By our earlier assumptions, $G^*$ has no isolated edge. Applying the edge-minimality to $G^*$, there is a WELD-set $D^*$ of $G^*$ of size at most $m/2 - 4$. The set $D^* \cup \{xy,xt,xz,f\}$ satisfies both (i) and (ii), implying that $G$ is not a counterexample, a contradiction. Therefore, $D_3$ satisfies (ii), once again implying that $G$ is not a counterexample, a contradiction. Therefore, at least one of $x$ and $z$ has degree at least~$4$.

We now remove the edges of the $4$-cycle in the diamond $M$ from $G$, and let $G''$ denote the resulting graph, and so $G''=G-\{xy,yz,zt,tx\}$. Since $G-E(M)$ had no isolated edge and $xz$ is not an isolated edge in $G''$, the graph $G''$ has no isolated edge. Applying the edge-minimality to $G''$, there is a WELD-set $D''$ of $G''$ of size at most $|E(G'')|/2 = m/2 - 2$.
If the edge $xz\in D''$, we let $D_4=D''\cup\{xy,xt\}$ and we can apply the same arguments as with $D_1$ to produce a contradiction. Hence, $xz \notin D''$. In order to dominate the edge $xz$, we may assume, renaming $x$ and $z$ if necessary, that there is an edge $e$ incident with $x$ that belongs to $D''$.

Let $D_5=D''\cup\{xy,zt\}$. Every pair of edge-twins of $G''$ would be located by the three edges $xy$, $zt$ and $e$, and so (ii) is satisfied by $D_5$. Since $yt$ is not an edge of $G$, the edge $yz$ is the unique edge dominated by both $xy$ and $zt$ but not $e$. Hence if (i) is not satisfied by $D_5$, then necessarily $xt$ is not located from $xz$. This implies that no edge incident with $z$ or $t$ belongs to $D''$. In this case, we let $D_6=D''\cup\{yz,zt\}$. As before, $D_6$ clearly satisfies (ii). If $D_6$ does not satisfy (i), one of $xy$ and $xt$ is not located from some edge. Renaming $t$ and $y$ if necessary, we may assume that $xy$ is not located from some edge, which can only be the edge $uy$, where $u$ is the end of $e$ different from $x$. But then, the edges $uy$ and $xz$ both were only dominated by the edge $e$ in $D''$, implying that they are edge-twins in $G''$. This in turn implies that either $uz$ is an edge of $G$ or $d(u)=2$. If $uz$ is an edge, then $\{x,y,z,u\}$ induce a $K_4$ in $G$, contradicting Claim~\ref{clm:no-K4}. If $d(u)=2$, then we contradict Claim~\ref{clm:T-deg2}. Therefore, (i) is satisfied by $D_6$, implying that $D_6$ must be a WELD-set of $G$, contradicting the fact that $G$ is a counterexample. Hence, (i) must have been satisfied by $D_5$, once again implying that $G$ is not a counterexample, a contradiction.~\smallqed

\medskip
\begin{clm}\label{clm:tfree}
$G$ is triangle-free.
\end{clm}
\claimproof Suppose, to the contrary, that $G$ contains a triangle $T$. Let $V(T) = \{u,v,w\}$. By Claim~\ref{clm:T-deg2}, every vertex of $T$ has degree at least~$3$ in $G$. If every vertex of $T$ has degree exactly~$3$ in $G$ and each of their neighbors not in $T$ has degree~$1$, then $G$ is determined and the three edges of the triangle form a WELD-set of size $\frac{m}{2}$, a contradiction. Hence we may assume, renaming vertices if necessary, that $v$ has degree at least~$4$ or $v$ has degree~$3$ and its neighbor outside $T$ has degree at least~$2$. We let $G'=G-\{uv,vw\}$. By the above assumption, $G'$ does not have any isolated edge. Applying the edge-minimality to $G'$, there is a WELD-set $D'$ of $G'$ of size at most $|E(G')|/2 = m/2 - 1$.
By Claim~\ref{clm:T}, the edge $uw\notin D'$. In order to dominate the edge $uw$, the set $D'$ contains at least one edge incident with $u$ or $w$.

Suppose that $D'$ contains an edge, $e_u$ say, incident with $u$ and an edge, $e_w$ say, incident with $w$. In this case, we consider the set $D_1=D'\cup\{vw\}$. Let $u'$ be the end of $e_u$ different from $u$, and let $w'$ be the end of $e_w$ different from $w$. By Claim~\ref{clm:diamond}, $G$ has no diamond, implying that $u' \ne w'$ and $uv$ is located by $D_1$, which therefore satisfies~(i). Moreover, if (ii) is not satisfied, we would have two edge-twins of $G'$, exactly one of them incident with $u$, and the other incident with $u'$. But these three edges would form a triangle with one vertex of degree~$2$, contradicting Claim~\ref{clm:T-deg2}. Therefore, renaming vertices if necessary, we may assume that there is an edge $ux$ in $D'$, but no edge incident with $w$ belongs to $D'$.

We now consider the set $D_2=D'\cup\{uw\}$. We show firstly that $D_2$ satisfies (i). The edge $uv$ is dominated by both $uw$ and $ux$. Since $G$ is diamond-free by Claim~\ref{clm:diamond}, we note that the edge $xw$ does not exist. Hence, the only possible edge in $E(G) \setminus D_2$ different from $uv$ that is dominated by both $uw$ and $ux$ is incident with $u$, say it is $uy$. In this case, $uy$ and $uw$ were not located by $D'$, hence they must have been edge-twins in $G'$. If $wy$ is an edge, then this edge would not be dominated by $D'$, a contradiction. If $wx$ is an edge, then $V(T) \cup \{x\}$ induce a diamond, a contradiction. Hence, $d(w) = 2$, contradicting Claim~\ref{clm:T-deg2}. Hence, the edge $uv$ is located by $D_2$. It remains for us to consider the edge $vw$ which is dominated by $uw$ but not by $ux$. Suppose there is an edge $e$ in $E(G) \setminus D_2$ different from $vw$ that is dominated by $uw$ but not by $ux$. Such an edge $e$ was dominated by $D'$. Let $f$ be an edge of $D'$ adjacent to $e$. By our earlier assumptions, the edge $f$ is not incident with $w$. Since $G$ is diamond-free, the edge $f$ is incident with neither $u$ nor $v$. Thus, the edge $f$ would locate the edges $vw$ and $e$. Therefore, $D_2$ satisfies (i).

We show next that $D_2$ satisfies (ii). Let $e$ and $e'$ be a pair of edges in $E(G) \setminus D_2$ that are edge-twins of $G'$ but are not edge-twins of $G$ and suppose, to the contrary, that they are not located by $D_2$. Renaming $e$ and $e'$ if necessary, we may assume that $v$ is incident with $e$ but not to $e'$.

Suppose that $e$ and $e'$ are not adjacent; that is, $e$ and $e'$ are open edge-twins in $G'$. By Claim~\ref{clm:diamond}, $G'$ has no diamond. By Observation~\ref{Ob:twins}(b), the component $C_v$ of $G'$ containing the vertex $v$ is therefore isomorphic to one of $P_4$, $C_4$, or $K_3^+$.
If $C_v \cong C_4$ or if $C_v \cong K_3^+$, then the WELD-set $D'$ contains both edges of $C_v$ that are different from $e$ and $e'$. In this case, simply removing one of these edges from $D'$ and replacing it with one of $e$ or $e'$ yields a new WELD-set $D'_2$ of $G'$ such that $D_2=D'_2\cup\{uw\}$ satisfies both (i) and (ii), implying that $G$ is not a counterexample, a contradiction. Hence, $C_v \cong P_4$. We note that $e$ and $e'$ are the pendant edges in $C_v$ (that are incident with a vertex of degree~$1$ in $C_v$). Let $f$ denote the central edge of the path $P_4$ of $C_v$. Necessarily, $f \in D'$ in order to dominate the edges $e$ and $e'$ in $G'$. We note that the vertex $v$ may possibly be a vertex of degree~$1$ or~$2$ in $C_v$.
We now consider the graph $G^*$ obtained from $G$ by deleting the three edges in $T$, deleting the three edges in $C_v$, and deleting any resulting isolated edges. Applying the edge-minimality to $G^*$, there is a WELD-set $D^*$ of $G^*$ of size at most $|E(G^*)|/2 \le m/2 - 3$. Using analogous arguments as before, the set $D^* \cup \{uv,vw,f\}$ can readily be shown to satisfy (i) and (ii), implying that $G$ is not a counterexample, a contradiction. Hence, the edges $e$ and $e'$ are adjacent.

Let $e = vv_1$ and $e' = v_1v_2$. If $vv_2$ is an edge of $G$, then $vv_1v_2v$ would be a triangle in $G$ with a vertex, namely $v_2$, of degree~$2$ in $G$, contradicting Claim~\ref{clm:T-deg2}. Hence, $vv_2$ is not an edge, implying that $v_2$ has degree~$1$ in $G$. Let $G''$ be the subgraph of $G$ obtained by removing the edges $\{uv,vw,uw,vv_1\}$ and, if necessary, any isolated edge of the obtained graph (such an edge may exist, if it is incident with $u$ or $w$, for example). The resulting graph $G''$ has no isolated edges. Applying the edge-minimality to $G''$, there is a WELD-set $D''$ of $G''$ of size at most $|E(G'')|/2 \le m/2 - 2$. Using analogous arguments as before, the set $D''\cup\{uv,uw\}$ can readily be shown to satisfy (i) and (ii), implying that $G$ is not a counterexample, a contradiction. Therefore, $D_2$ satisfies both (i) and (ii), once again implying that $G$ is not a counterexample, a contradiction.~\smallqed

\medskip
By Claim~\ref{clm:tfree}, the graph $G$ is triangle-free. By Claim~\ref{clm:cycle}, $G$ has a cycle. We show next that $G$ has no $4$-cycle.

\medskip
\begin{clm}\label{clm:qfree}
$G$ does not contain any $4$-cycles.
\end{clm}
\claimproof Suppose, to the contrary, that $G$ contains a $4$-cycle, $C$. Let $C$ be given by $u_0u_1u_2u_3u_0$. By Claim~\ref{clm:tfree}, $C$ is an induced $4$-cycle. Let $P$ be the set of edges, if any, that would be isolated in $G - E(C)$. We note that if $P \ne \emptyset$, then each edge in $P$ has one end in $V(C)$ and its other end has degree~$1$ in $G$. In this case, we call the edge of $P$ incident with $u_i$ the edge $p_i$, where $i \in \{0,1,2,3\}$.

Suppose that $E(G) = E(C)\cup P$. In this case, $|P| \ge 1$ since by Claim~\ref{clm:etf} $G$ is edge-twin-free. If $|P|=1$, then we may assume that $P = \{p_0\}$. In this case, $m = 5$ and $\{u_0u_1,u_0u_3\}$ is a WELD-set, and so $\gwLe(G) = 2 < m/2$, a contradiction. Hence, $|P| \ge 2$. If $2 \le |P| \le 3$, we may select three edges of $C$ to form a WELD-set, while if $|P|=4$, we may select all edges of $C$. In all cases, $\gwLe(G) \le m/2$, a contradiction. Hence, $E(G) \ne E(C)\cup P$.
Let $G' = G - (E(C) \cup P)$. By the definition of $P$, the graph $G'$ has no isolated edge. Applying the edge-minimality of $G$ to $G'$, there is a WELD-set, $D'$, of $G'$ of size at most $|E(G')|/2$.

Suppose that there is some edge, $e'$, of $D'$ incident with a vertex of $C$, say $u_0$. Let $D_1=D'\cup\{u_0u_1, u_2u_3,p_2\}$ if both $p_2$ and $p_3$ exist; otherwise, let $D_1=D'\cup\{u_0u_1, u_2u_3\}$. The only possibility that (ii) is not satisfied for $D_1$ is the existence of a pair of edges in $E(G) \setminus D_1$ that form a triangle together with the edge $u_2u_3$, contradicting Claim~\ref{clm:tfree}. Moreover, (i) is also satisfied for $D_1$. For example, if $u_1u_2$ or $u_0u_3$ is not located from some other edge, such an edge could only be the edge $u_1u_3$ or $u_0u_2$, respectively, but again this would imply the existence of a triangle in $G$, a contradiction. The edge $p_0$, if it exists, is the only edge dominated by both $u_0u_1$ and $e'$. Each edge $p_i$, different from $p_0$ and not in $D_1$, is the only edge uniquely dominated by its neighbor among $\{u_0u_1,u_2u_3\}$. Thus, $D_1$ satisfies both (i) and (ii), implying that $G$ is not a counterexample, a contradiction. Hence, no edge of $D'$ is incident with a vertex of $C$.

Since no edge of $D'$ is incident with a vertex of $C$, the edges of $G'$ are therefore dominated by $D'$ but no edge of $D'$ dominates any edge of $E(C)\cup P$, implying that all edges of $G'$ are located by $D'$ from all edges of $E(C)\cup P$. Hence, if there is no pair of edges that are edge-twins in $G'$, it is easy to extend $D'$ to a WELD-set of $G$ of at most $m/2$ edges. Therefore, we can assume that there are edge-twins in $G'$ (but not $G$).

Let $e$ and $e'$ be a pair of edges in $E(G') \setminus D'$ that are edge-twins of $G'$ but are not edge-twins of $G$. By Observation~\ref{Ob:twins}, and since $G$ has no edge-twins, the edge $e'$ is the unique edge-twin of $e$, and conversely. If one of them, say the edge $e$, is incident with exactly one vertex of the cycle $C$ and the other, $e'$, is not incident with a vertex of $C$, we call the edge $e$ a \emph{bad edge}. Let $B$ be the set of bad edges in $G'$. Note that for any pair $f$ and $f'$ of edge-twins of $G'$ without any bad edge, if $f$ and $f'$ are open edge-twins, they are adjacent to distinct vertices of $C$, and if they are closed edge-twins, they must be adjacent to opposite vertices of $C$ (otherwise we would have triangles in $G$).

Suppose $|B| + |P| \ge 2$. We now consider the graph $G'' = G' - B$. At least six edges were removed from $G$ when constructing $G''$. We note that $G''$ cannot have an isolated edge, because any pair $e,e'$ of edge-twins in $G'$ had a common neighbor in $D'$ and hence in $G''$. Applying the edge-minimality of $G$ to $G''$, there is a WELD-set $D''$ of $G'$ of size at most $|E(G'')|/2 \le m/2 - 3$. The set $D''$ can in this case be extended, using analogous arguments as before, to a WELD-set of $G$ by adding to it any three edges from the cycle $C$, implying that $G$ is not a counterexample, a contradiction. Hence, $|B| + |P| \le 1$.

If $|P| = 1$, we may assume, renaming the vertices of $C$ if necessary, that $P = \{p_2\}$. Further if $|B| = 1$, we may assume that the bad edge of $G'$ is incident with the vertex $u_2$. We now consider the set $D_2 = D' \cup \{u_1u_2,u_2u_3\}$. Since $P$ does not contain the edge $p_1$ or the edge $p_3$, the edge $u_0u_1$ is located by $D_2$, as is the edge $u_0u_3$. Thus, $D_2$ satisfies (i). Note that any pair of edge-twins of $G'$ without a bad edge is located by $D_2$. Moreover, since $B$ does not contain an edge incident with $u_0$, the set $D_2$ also satisfies (ii), implying that $G$ is not a counterexample, a contradiction.~\smallqed


\medskip
\begin{clm}\label{clm:evengirth}
The girth of $G$ is even.
\end{clm}
\claimproof Suppose, to the contrary, that the girth of $G$ is odd. Let $C$ be a shortest cycle in $G$ and let $C$ have length~$2k+1$. By Claim~\ref{clm:tfree}, $k \ge 2$. Let $C$ be given by $u_0u_1 \ldots u_{2k}u_0$. Let $F = \{u_{2i-1}u_{2i} \mid i \in [k] \}$, and note that $|F| = k$. If $G = C$, then $m = 2k+1$ and the set $F$ is a WELD-set of $G$, and so $\gwLe(G) \le k < \frac{m}{2}$, a contradiction. Hence, $G \ne C$. Let $P$ be the set of edges, if any, that would be isolated in $G - E(C)$. We note that if $P \ne \emptyset$, then each edge in $P$ has one end in $V(C)$ and its other end has degree~$1$ in $G$. In this case, we call the edge of $P$ incident with $u_i$ the edge $p_i$, where $i \in \{0,1,\ldots,2k\}$.

We now define a set $F_P$ as follows. If $P = \emptyset$, let $F_P = \emptyset$. If $P \ne \emptyset$, then renaming vertices of $C$, if necessary, we may assume that $p_0 \in P$ and we define $F_P$ as follows. Let $p_0 \in F_P$ and for $i \in [k]$, if both $p_{2i-1}$ and $p_{2i}$ exist, we add the edge $p_{2i-1}$ to $F_P$.

Suppose $E(G) = E(C) \cup P$. Then, $G$ consists of a cycle $C$ with pendant edges attached to some vertices of $C$. Since $G \ne C$, we note that in this case $P \ne \emptyset$. The set $F \cup F_P$ is a WELD-set of $G$ of size at most~$\frac{m}{2}$, a contradiction. Hence, $E(G) \ne E(C) \cup P$.
We now consider the graph $G' = G - (E(C) \cup P)$. The graph $G'$ has no isolated edge. Applying the edge-minimality to $G'$, there is a WELD-set $D'$ of $G'$ of size at most $|E(G')|/2$. Let $D_1 = D' \cup F \cup F_P$. 
If $P = \emptyset$ and if there exists an edge of $D'$ incident with some vertex of $C$, then renaming vertices of $C$, if necessary, we may assume that $u_0$ is incident with an edge of $D'$.

Suppose that there is an edge of $D'$ incident with some vertex of $C$ and let $x$ be the end of such an edge that does not belong to $C$. By our naming of the vertices of $C$, we note that either $P \ne \emptyset$, in which case $p_0 \in F_P$, or $P = \emptyset$, in which case $u_0$ is incident with an edge of $D'$. If some edge in $E(C) \setminus D_1$ is not located from some edge of $E(G') \setminus D'$ in $G$, then $C$ would have a chord or $G$ would contain a triangle or there would be a $4$-cycle that contains the vertex $x$, a contradiction. If some edge of $P \setminus D_1$ is not located from some edge of $E(G') \setminus D'$, then this edge of $G'$ would have been undominated by $D'$, a contradiction. Therefore, $D_1$ satisfies (i). The only possibility that (ii) is not satisfied for $D_1$ is the existence of a pair $e$ and $e'$ of edges in $E(G) \setminus D_1$ that form a triangle together with an edge of $C$, contradicting Claim~\ref{clm:tfree}. Hence, (ii) is also satisfied by $D_1$, implying that $G$ is not a counterexample, a contradiction. Hence, no edge of $D'$ is incident with a vertex of $C$.

As before, if some edge in $E(C) \setminus D_1$ is not located from some edge of $E(G') \setminus D'$ in $G$, then we would obtain a smaller cycle in $G$ than $C$ or an edge of $G'$ not dominated by $D'$. Both possibilities are not possible. Hence, $D_1$ satisfies (i).
We show next that $D_1$ satisfies (ii). Let $e$ and $e'$ be a pair of edges in $E(G) \setminus D_1$ that are edge-twins of $G'$ that are not edge-twins of $G$ and suppose, to the contrary, that they are not located by $D_1$. This is only possible if $P = \emptyset$ and exactly one of $e$ and $e'$ is incident with $u_0$. Renaming $e$ and $e'$ if necessary, we may assume that $u_0$ is incident with $e$ but not with $e'$. If $e$ and $e'$ are not adjacent, then in this case, the component containing the vertex $u_0$ in $G'$ is a path $P_4$, say $u_0v_1v_2v_3$, where $e = u_0v_1$, $e' = v_2v_3$ and $v_1v_2 \in D'$. If $e$ and $e'$ are adjacent, then in this case, there is a path $P_3$ emanating from $u_0$, say $u_0v_1v_2$ where $e = u_0v_1$, $e' = v_1v_2$, $d_G(v_2) = 1$, and there is an edge of $D'$ incident with $v_1$. In both cases, we consider the graph $G'' = G' - e$. We observe that $G''$ has no isolated edge and that $2(k+1)$ edges were removed from $G$ to obtain $G''$. Applying the edge-minimality to $G''$, there is a WELD-set $D''$ of $G''$ of size at most $|E(G'')|/2 = m/2 - k - 1$. The set $D'' \cup F \cup \{e\}$ is now a WELD-set of $G$, and so $\gwLe(G) \le m/2$, a contradiction. Hence, $D_1$ satisfies both (i) and (ii), a contradiction.~\smallqed

\medskip
We now return to the proof of Theorem~\ref{thm:eLD} one last time. By Claim~\ref{clm:evengirth}, the girth of $G$ is even. Let $C$ be a shortest cycle in $G$ and let $C$ have length~$2k$. By Claim~\ref{clm:qfree}, $k \ge 3$. Let $C$ be given by $u_0u_1 \ldots u_{2k-1}u_0$. Let $F = \{u_{2i}u_{2i+1} \mid i \in \{0,1,\ldots,k-1\} \}$, and note that $|F| = k$. If $G = C$, then $m = 2k$ and the set $F$ is a WELD-set of $G$, and so $\gwLe(G) \le k = \frac{m}{2}$, a contradiction. Let $P$ be the set of edge defined as in the proof of Claim~\ref{clm:evengirth}.
If $P = \emptyset$, let $F_P = \emptyset$. If $P \ne \emptyset$, then we define $F_P$ as follows. For $i \in [k]$, if both $p_{2(i-1)}$ and $p_{2i-1}$ exist, we add the edge $p_{2(i-1)}$ to $F_P$. If $E(G) = E(C) \cup P$, then the set $F \cup F_P$ is a WELD-set of $G$ of size at most~$\frac{m}{2}$, a contradiction. Hence, $E(G) \ne E(C) \cup P$. We now consider the graph $G' = G - (E(C) \cup P)$. The graph $G'$ has no isolated edge. Applying the edge-minimality to $G'$, there is a WELD-set $D'$ of $G'$ of size at most $|E(G')|/2$. Let $D_1 = D' \cup F \cup F_P$. If the set $D_1$ does not satisfy (ii), we would have a triangle in $G$, a contradiction. If the set $D_1$ does not satisfy (i), then either $C$ would have a chord, or some edge of $G'$ would not be dominated by $D'$, a contradiction in each case. Therefore, $D_1$ satisfies both (i) and (ii), implying that $G$ is not a counterexample, a contradiction. We deduce, therefore, that the counterexample $G$ could not have existed. This completes the proof of Theorem~\ref{thm:eLD}.
\end{proof}

\medskip
As a special case of Theorem~\ref{thm:eLD}, we have the following result.

\begin{thm}\label{thm:eLD1} If $G$ is an edge-twin-free graph with $m$ edges and no isolated edge, then $\gLe(G)\le \frac{m}{2}$.
\end{thm}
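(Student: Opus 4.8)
The plan is to derive Theorem~\ref{thm:eLD1} as a direct consequence of Theorem~\ref{thm:eLD}, whose entire burden has already been discharged. The key observation is that the two notions---a \emph{weak} edge-locating-dominating set (WELD-set) and an (ordinary) edge-locating-dominating set---differ only in how they treat pairs of edge-twins. A WELD-set is required to locate every pair of edges in $E(G)\setminus D$ that are \emph{not} edge-twins, whereas an edge-locating-dominating set must locate \emph{every} pair in $E(G)\setminus D$. Thus when the ambient graph $G$ is edge-twin-free, there are simply no edge-twin pairs to exempt, and the two definitions coincide. Consequently $\gLe(G)=\gwLe(G)$ for every edge-twin-free graph $G$.

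First I would state precisely this equivalence: if $G$ has no edge-twins, then a set $D$ of edges is a WELD-set of $G$ if and only if it is an edge-locating-dominating set of $G$. One direction is immediate since every edge-locating-dominating set locates all non-edge-twin pairs and so is a fortiori a WELD-set. For the converse, let $D$ be a WELD-set; since $G$ is edge-twin-free, any two distinct edges $e,f\in E(G)\setminus D$ are automatically non-edge-twins, so the WELD condition forces $N(e)\cap D\neq N(f)\cap D$, and $D$ is edge-dominating by definition, whence $D$ is an edge-locating-dominating set. This gives $\gLe(G)=\gwLe(G)$.

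Next I would invoke Theorem~\ref{thm:eLD}: since $G$ has $m$ edges and no isolated edge, it admits a WELD-set of size at most $\frac{m}{2}$, so $\gwLe(G)\le\frac{m}{2}$. Combining this with the equivalence $\gLe(G)=\gwLe(G)$ yields $\gLe(G)\le\frac{m}{2}$, which is exactly the claimed bound.

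There is essentially no obstacle here: the heavy lifting---the long minimum-counterexample induction establishing the WELD bound for arbitrary graphs---is precisely why Theorem~\ref{thm:eLD} was phrased in terms of \emph{weak} edge-locating-dominating sets in the first place. The weak notion is the correct inductive invariant because it survives edge deletions gracefully (edge-twin pairs may be created in subgraphs $G'$ and are allowed to be unlocated there), while the edge-twin-free hypothesis of Theorem~\ref{thm:eLD1} is exactly what collapses the weak notion back to the ordinary one. The only point requiring a word of care is confirming that the edge-twin-free hypothesis applies to $G$ itself and not to the auxiliary subgraphs, which is immediate since the statement is about $G$ directly.
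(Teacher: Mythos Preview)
Your proposal is correct and matches the paper's approach exactly: the paper simply states that Theorem~\ref{thm:eLD1} is a special case of Theorem~\ref{thm:eLD}, and your argument---that for an edge-twin-free graph the WELD and edge-locating-dominating conditions coincide, so $\gLe(G)=\gwLe(G)\le m/2$---is precisely the justification behind that remark.
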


We remark that two edges are edge-twins in a graph $G$ if and only if the corresponding vertices in the line graph, $\cL(G)$, of $G$ are twins in $\cL(G)$. Further, a set of edges in $G$ is an edge-locating-dominating set of $G$ if and only if the corresponding set of vertices in the line graph $\cL(G)$ of $G$ is a locating-dominating set of $\cL(G)$. The following is therefore a reformulation of Theorem~\ref{thm:eLD1} in the language of line graphs.

\begin{cor}\label{cor:eLD}
If $G$ is a twin-free line graph of order $n$ without isolated vertices, then $\gL(G)\le \frac{n}{2}$.
\end{cor}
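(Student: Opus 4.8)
The plan is to deduce the corollary from Theorem~\ref{thm:eLD1} by passing through the standard dictionary between a graph and its line graph, which is exactly the content of the remark immediately preceding the statement. Since $G$ is a line graph, I would first write $G = \cL(H)$ for some root graph $H$; every line graph is by definition the line graph of some graph, so such an $H$ exists. Under this identification the vertices of $G$ are precisely the edges of $H$, so $|V(G)| = |E(H)|$, and writing $m = |E(H)|$ I have $m = n$.

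The next step is to check that the two hypotheses of Theorem~\ref{thm:eLD1} transfer from $G$ to $H$. A vertex of $G = \cL(H)$ is isolated exactly when the corresponding edge of $H$ shares no endpoint with any other edge, i.e.\ is an isolated edge of $H$; hence $G$ having no isolated vertex is equivalent to $H$ having no isolated edge. For the twin-free condition I would invoke the remark, which states that two edges of $H$ are edge-twins if and only if the corresponding vertices of $\cL(H)$ are twins; consequently $G$ being twin-free is equivalent to $H$ being edge-twin-free. Thus $H$ is an edge-twin-free graph on $m = n$ edges with no isolated edge.

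Now I can apply Theorem~\ref{thm:eLD1} to $H$, obtaining an edge-locating-dominating set $D$ of $H$ with $|D| = \gLe(H) \le \frac{m}{2} = \frac{n}{2}$. The final step is to translate $D$ back: by the second assertion of the remark, a set of edges of $H$ is an edge-locating-dominating set of $H$ if and only if the corresponding set of vertices of $\cL(H) = G$ is a locating-dominating set of $G$. Hence the image of $D$ is a locating-dominating set of $G$ of the same cardinality, giving $\gL(G) \le \frac{n}{2}$, as required.

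Since all of the combinatorial work lives in Theorem~\ref{thm:eLD}, the only point requiring care in this argument is that the line-graph dictionary is genuinely a structure-preserving bijection on all three relevant notions---isolated vertices versus isolated edges, twins versus edge-twins, and locating-dominating sets versus edge-locating-dominating sets. I expect no real obstacle here beyond confirming these equivalences (the latter two already asserted in the remark), together with the harmless observation that any isolated vertices of the root graph $H$ may be discarded without affecting $\cL(H)$, so that choosing a reasonable root graph causes no degeneracy.
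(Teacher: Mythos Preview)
Your argument is correct and matches the paper's approach exactly: the paper does not give a separate proof of the corollary but simply states, in the remark you cite, that the twin/edge-twin and locating-dominating/edge-locating-dominating correspondences make the corollary a direct reformulation of Theorem~\ref{thm:eLD1}. Your write-up spells out the dictionary (including the isolated-vertex/isolated-edge correspondence and the harmless removal of isolated vertices in the root graph) slightly more carefully than the paper does, but the content is identical.
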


By Corollary~\ref{cor:eLD}, Conjecture~\ref{conj-LD} is true for the class of line graphs.
We remark that Theorem~\ref{thm:eLD1} (and hence Corollary~\ref{cor:eLD}) is tight in the sense that there are infinitely many edge-twin-free graphs $G$ with edge-location-domination number $\frac{|E(G)|}{2}$. For example, consider the trees $T$ built from a collection of vertex-disjoint paths each of length either~$2$ or~$4$ by selecting a leaf from each path and identifying the selected vertices in one new vertex. Equivalently, $T$ is obtained from a star by subdividing some edges exactly once and subdividing the remaining edges exactly three times. Every edge-locating-dominating set in such a tree $T$ contains at least one edge from each branch of length~$2$ and at least two edges from each branch of length~$4$ in order to both dominate every edge and to locate the edges. Thus, $\gwLe(T) \ge |E(T)|/2$. By Theorem~\ref{thm:eLD1}, $\gwLe(T) \le |E(T)|/2$. Consequently, $\gwLe(T) = |E(T)|/2$.

For some additional (small) examples, let $G$ be an edge-twin-free graph on six edges. Suppose, to the contrary, that there is an edge-locating-dominating set, $D$, of size~$2$. Then, two edges of $E(G) \setminus D$ can be dominated by a single edge, and one, by two edges. But then $G$ has at most five edges, a contradiction. Hence, the class of edge-twin-free graphs of size~$6$ has edge-location-domination number~$3$ and yields a simple set of graphs that are extremal with respect to Theorem~\ref{thm:eLD1}. 
See Figure~\ref{fig:ELD-tight} for an illustration.\footnote{We remark that the class of non-isomorphic, edge-twin free, connected graphs of size~$6$ can readily be found by computer (or can easily be deduced by hand from the list of graphs of order~$6$ in~\cite{Small_Graphs}).}

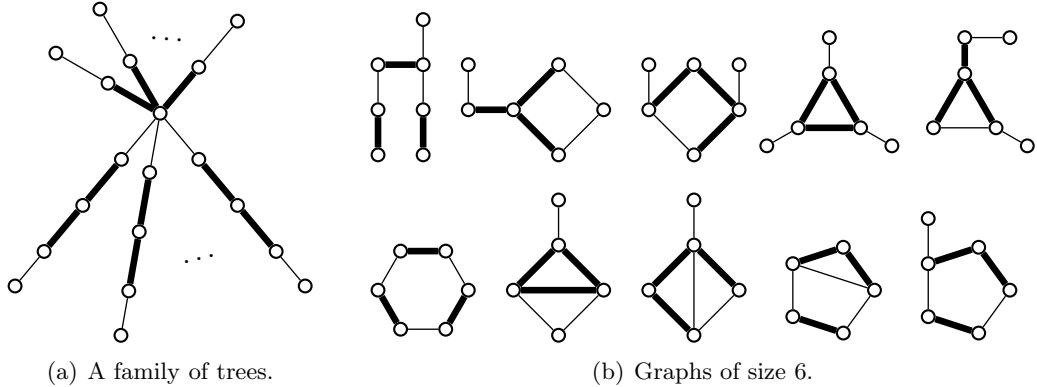
\begin{figure}[ht]
\centering
\subfigure[A family of trees.]{\scalebox{1.0}{\begin{tikzpicture}[join=bevel,inner sep=0.6mm,line width=0.8pt, scale=1]

\node[draw,shape=circle] (c) at (0,0) {};

\node[draw,shape=circle](u0) at (150:0.8) {};
\node[draw,shape=circle](v0) at (150:1.6) {};

\node[draw,shape=circle](u1) at (120:0.8) {};
\node[draw,shape=circle](v1) at (120:1.6) {};

\node[draw,shape=circle](u2) at (50:0.8) {};
\node[draw,shape=circle](v2) at (50:1.6) {};

\node[draw,shape=circle](u3) at (230:0.8) {};
\node[draw,shape=circle](v3) at (230:1.6) {};
\node[draw,shape=circle](w3) at (230:2.4) {};
\node[draw,shape=circle](x3) at (230:3) {};

\node[draw,shape=circle](u4) at (260:0.8) {};
\node[draw,shape=circle](v4) at (260:1.6) {};
\node[draw,shape=circle](w4) at (260:2.4) {};
\node[draw,shape=circle](x4) at (260:3) {};

\node[draw,shape=circle](u5) at (310:0.8) {};
\node[draw,shape=circle](v5) at (310:1.6) {};
\node[draw,shape=circle](w5) at (310:2.4) {};
\node[draw,shape=circle](x5) at (310:3) {};

\draw[line width=2.5pt] (c)--(u0) (c)--(u1) (c)--(u2) (u3)--(v3)--(w3) (u4)--(v4)--(w4) (u5)--(v5)--(w5);

\draw[line width=0.5pt] (u0)--(v0) (u1)--(v1) (u2)--(v2) (c)--(u3) (c)--(u4) (c)--(u5) (w3)--(x3) (w4)--(x4) (w5)--(x5);

\node[rotate=14] at (285:2) {$\ldots$};
\node[rotate=-4] at (85:1) {$\ldots$};

  \end{tikzpicture}}}\qquad
\subfigure[Graphs of size~$6$.]{\scalebox{1.0}{\begin{tikzpicture}[join=bevel,inner sep=0.6mm,line width=0.8pt, scale=0.6]

\begin{scope}[xshift=-1cm,yshift=-1cm]
\path (0,0) node[draw,shape=circle] (u0) {};
\path (u0)+(0,1) node[draw,shape=circle] (u1) {};
\path (u1)+(0,1) node[draw,shape=circle] (u2) {};
\path (u2)+(1,0) node[draw,shape=circle] (u3) {};
\path (u3)+(0,-1) node[draw,shape=circle] (u4) {};
\path (u4)+(0,-1) node[draw,shape=circle] (u5) {};
\path (u3)+(0,1) node[draw,shape=circle] (u6) {};

\draw[line width=2.5pt] (u0)--(u1) (u2)--(u3) (u4)--(u5);

\draw[line width=0.5pt] (u1)--(u2) (u3)--(u4) (u3)--(u6);
\end{scope}

\begin{scope}[xshift=3cm,yshift=0cm]
\node[draw,shape=circle](u0) at (0:1) {};
\node[draw,shape=circle](u1) at (90:1) {};
\node[draw,shape=circle](u2) at (180:1) {};
\node[draw,shape=circle](u3) at (270:1) {};
\path (u2)+(-1,0) node[draw,shape=circle] (u4) {};
\path (u4)+(0,1) node[draw,shape=circle] (u5) {};

\draw[line width=2.5pt] (u3)--(u2)--(u4) (u1)--(u2);

\draw[line width=0.5pt] (u5)--(u4) (u3)--(u0)--(u1);
\end{scope}

\begin{scope}[xshift=6cm,yshift=0cm]
\node[draw,shape=circle](u0) at (0:1) {};
\node[draw,shape=circle](u1) at (90:1) {};
\node[draw,shape=circle](u2) at (180:1) {};
\node[draw,shape=circle](u3) at (270:1) {};
\path (u0)+(0,1) node[draw,shape=circle] (u4) {};
\path (u2)+(0,1) node[draw,shape=circle] (u5) {};

\draw[line width=2.5pt] (u3)--(u0)--(u1)--(u2);

\draw[line width=0.5pt] (u0)--(u4) (u5)--(u2)--(u3);
\end{scope}

\begin{scope}[xshift=9cm,yshift=0cm]
\node[draw,shape=circle](u0) at (330:0.8) {};
\node[draw,shape=circle](u1) at (90:0.8) {};
\node[draw,shape=circle](u2) at (210:0.8) {};
\node[draw,shape=circle](u3) at (330:1.6) {};
\node[draw,shape=circle](u4) at (90:1.6) {};
\node[draw,shape=circle](u5) at (210:1.6) {};

\draw[line width=2.5pt] (u0)--(u1)--(u2)--(u0);

\draw[line width=0.5pt] (u0)--(u3) (u1)--(u4) (u2)--(u5);
\end{scope}

\begin{scope}[xshift=12cm,yshift=0cm]
\node[draw,shape=circle](u0) at (330:0.8) {};
\node[draw,shape=circle](u1) at (90:0.8) {};
\node[draw,shape=circle](u2) at (210:0.8) {};
\node[draw,shape=circle](u3) at (330:1.6) {};
\node[draw,shape=circle](u4) at (90:1.6) {};
\path (u4)+(1,0) node[draw,shape=circle] (u5) {};

\draw[line width=2.5pt] (u0)--(u1)--(u4) (u1)--(u2);

\draw[line width=0.5pt] (u5)--(u4) (u2)--(u0)--(u3);
\end{scope}




\begin{scope}[xshift=0cm,yshift=-4cm]
\node[draw,shape=circle](u0) at (0:1) {};
\node[draw,shape=circle](u1) at (60:1) {};
\node[draw,shape=circle](u2) at (120:1) {};
\node[draw,shape=circle](u3) at (180:1) {};
\node[draw,shape=circle](u4) at (240:1) {};
\node[draw,shape=circle](u5) at (300:1) {};

\draw[line width=2.5pt] (u5)--(u0) (u1)--(u2) (u3)--(u4);

\draw[line width=0.5pt] (u0)--(u1) (u2)--(u3) (u4)--(u5);
\end{scope}

\begin{scope}[xshift=3cm,yshift=-4cm]
\node[draw,shape=circle](u0) at (0:1) {};
\node[draw,shape=circle](u1) at (90:1) {};
\node[draw,shape=circle](u2) at (180:1) {};
\node[draw,shape=circle](u3) at (270:1) {};
\path (u1)+(0,1) node[draw,shape=circle] (u4) {};

\draw[line width=2.5pt] (u0)--(u1)--(u2)--(u0);

\draw[line width=0.5pt] (u1)--(u4) (u2)--(u3)--(u0);
\end{scope}

\begin{scope}[xshift=6cm,yshift=-4cm]
\node[draw,shape=circle](u0) at (0:1) {};
\node[draw,shape=circle](u1) at (90:1) {};
\node[draw,shape=circle](u2) at (180:1) {};
\node[draw,shape=circle](u3) at (270:1) {};
\path (u1)+(0,1) node[draw,shape=circle] (u4) {};

\draw[line width=2.5pt] (u0)--(u1)--(u2)--(u3);

\draw[line width=0.5pt] (u0)--(u3)--(u1)--(u4);
\end{scope}

\begin{scope}[xshift=9cm,yshift=-4cm]
\node[draw,shape=circle](u0) at (0:1) {};
\node[draw,shape=circle](u1) at (72:1) {};
\node[draw,shape=circle](u2) at (144:1) {};
\node[draw,shape=circle](u3) at (216:1) {};
\node[draw,shape=circle](u4) at (288:1) {};

\draw[line width=2.5pt] (u0)--(u1)--(u2) (u3)--(u4);

\draw[line width=0.5pt] (u2)--(u3) (u4)--(u0)--(u2);
\end{scope}

\begin{scope}[xshift=12cm,yshift=-4cm]
\node[draw,shape=circle](u0) at (0:1) {};
\node[draw,shape=circle](u1) at (72:1) {};
\node[draw,shape=circle](u2) at (144:1) {};
\node[draw,shape=circle](u3) at (216:1) {};
\node[draw,shape=circle](u4) at (288:1) {};
\path (u2)+(0,1) node[draw,shape=circle] (u5) {};

\draw[line width=2.5pt] (u0)--(u1)--(u2) (u3)--(u4);

\draw[line width=0.5pt] (u2)--(u3) (u4)--(u0) (u5)--(u2);
\end{scope}
  \end{tikzpicture}}}
\caption{Edge-twin-free graphs with edge-location-domination number half the size. The thick edges are part of an optimal edge-locating dominating set.}
  \label{fig:ELD-tight}
\end{figure}

\section{Locating-total dominating sets}\label{sec:LTD}

In this section, we prove Conjecture~\ref{conj-LTD} for line graphs. For this purpose, we shall need the following key result about edge-locating-total-domination in graphs. Recall that we abbreviate an edge-locating-total-dominating set by an ELTD-set.

\begin{thm}\label{thm:eLTD} If $G$ is an edge-twin-free graph with $m$ edges and no isolated edge, then $\gLTe(G)\le \frac{2}{3}m$.
\end{thm}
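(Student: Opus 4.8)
The plan is to prove Theorem~\ref{thm:eLTD} by the same minimum-counterexample method used for Theorem~\ref{thm:eLD}, but with the target density $\frac{2}{3}$ rather than $\frac{1}{2}$. I would let $G$ be an edge-twin-free graph on $m$ edges with no isolated edge and with $\gLTe(G) > \frac{2}{3}m$ of minimum size. The base-case verification for small $m$ is the analogue of the $P_3,K_{1,3},P_4,C_3$ check, and I expect the bound to hold easily there. The overall strategy is identical: delete a carefully chosen set $S$ of edges to form $G'$ (ensuring $G'$ has no isolated edge), invoke minimality to obtain an ELTD-set $D'$ of $G'$ with $|D'| \le \frac{2}{3}|E(G')|$, and then extend $D'$ to an ELTD-set $D$ of $G$ by adding at most $\frac{2}{3}|S|$ edges. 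The crucial difference from the weak-locating case is that we now need \emph{total} edge-domination: every edge of $G$ (including those in $D$) must be adjacent to an edge of $D$, and there is no ``weak'' relaxation tolerating one undominated pair, so every pair of edges outside $D$ must genuinely be located. The extension conditions become: (i) every edge of $S$ not in $D$ is totally dominated and located from all other edges of $E(G)\setminus D$, and (ii) every pair that are edge-twins in $G'$ but not in $G$ is located by $D$. I would re-establish the structural claims in the same order---connectedness, a forced cycle, no $K_4$, no diamond, no triangle containing a low-degree vertex, triangle-freeness, $4$-cycle-freeness, and even girth---since each reduction argument carries over with the added bookkeeping that the inserted edges must themselves be totally dominated inside $G$.

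The most delicate accounting occurs in the girth endgame, which I expect to be the main obstacle. Once $G$ is triangle-free, $C_4$-free, and of even girth, I would take a shortest cycle $C$ of length $2k$ (with $k \ge 3$ by $C_4$-freeness) and seek a subset $F \subseteq E(C)$ that totally dominates all of $E(C)$ while contributing at most $\frac{2}{3}|E(C)\cup P|$ edges, where $P$ is again the set of pendant edges that would become isolated. Here the $\frac{2}{3}$ ratio is natural: on a long cycle an edge-total-dominating set needs roughly two out of every three edges (one cannot take every third edge, since isolated chosen edges are not themselves dominated), so the optimal pattern takes consecutive pairs of edges separated by single gaps, giving density close to $\frac{2}{3}$. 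The obstacle is that locating all edges of $C$ from each other while respecting this total-domination pattern on a short even cycle is tight, and the interaction with the pendant edges $P$ must be handled so that each $p_i$ is both totally dominated and located. I would design $F \cup F_P$ explicitly on $C$ and verify directly that it locates all cycle and pendant edges, falling back---as in Claim~\ref{clm:evengirth}---to deleting a few more edges and reapplying minimality whenever a pair of edge-twins of $G'$ fails to be separated.

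For the tree-reduction step (the analogue of Claim~\ref{clm:cycle}), I would again root a longest path and use the forced degree-$2$ vertex $v$ from edge-twin-freeness; removing the two edges $uv,vw$ at the path-end and adding back $vw$ works for the weak case, but for total domination I must ensure $vw$ is itself dominated in $G$ and that deleting these two edges keeps the ratio, so I expect to remove and re-add in groups whose sizes are multiples of three where possible. Throughout, the recurring technical burden---and the genuine source of difficulty relative to the $\frac{1}{2}$ theorem---is that each time we add a small set of edges to $D'$ we must certify total domination of those very edges and of their neighbors, so several of the claims will require enlarging $S$ (deleting triangles-plus-attachments in blocks of three, or handling $K_4$ by removing all six edges) to keep $|D\setminus D'| \le \frac{2}{3}|S|$. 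I anticipate that the no-$K_4$, no-diamond, and triangle claims generalize with only constant-factor changes to the counts, while the even-girth and final long-cycle analyses demand the most careful choice of $F$ to simultaneously achieve the $\frac{2}{3}$ density, total domination, and full location.
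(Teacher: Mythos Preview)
Your proposal has a genuine gap that the paper's approach is specifically designed to avoid. You plan to delete a set $S$ of edges, ensure $G'=G-S$ has no isolated edge, and then apply minimality to obtain an ELTD-set $D'$ of $G'$ with $|D'|\le \frac{2}{3}|E(G')|$. But the hypothesis of Theorem~\ref{thm:eLTD} applies only to \emph{edge-twin-free} graphs, and your deletions will in general create edge-twins in $G'$. In Theorem~\ref{thm:eLD} this was not an issue precisely because the paper introduced the weak notion (WELD-sets): ``weak'' there does not mean ``one undominated pair is tolerated'' as you write, but rather that pairs of edge-twins are exempt from the location requirement. That relaxation is exactly what allowed the minimum-counterexample hypothesis to be invoked on \emph{any} subgraph with no isolated edge, edge-twin-free or not, and is the reason the paper bothered to state and prove Theorem~\ref{thm:eLD} for WELD-sets rather than for edge-locating-dominating sets directly. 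Without an analogous relaxation in the total-dominating setting, your condition~(ii) --- locating pairs that are edge-twins in $G'$ but not in $G$ --- is moot, because you cannot apply minimality to $G'$ in the first place.

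The paper's proof is accordingly quite different in structure from what you outline. Rather than the long chain of forbidden-subgraph claims (no $K_4$, no diamond, no triangle with a degree-$2$ vertex, triangle-free, $C_4$-free, even girth), it proceeds by four direct cases: $G$ is a tree; $G$ contains a triangle; $G$ is triangle-free but contains a $C_4$; and $G$ has girth at least~$5$. In each cycle case, after deleting the cycle edges the paper explicitly constructs two further edge sets $S_0$ (edges of small components of the deletion) and $S_1$ (edges chosen to kill any newly created closed edge-twins, including entire ``bad twin components'') so that the remaining graph $G''$ is genuinely edge-twin-free and induction applies. Most of the work is then the case analysis of extending $D''$ back across $E(C)\cup S_0\cup S_1$ while respecting the $\frac{2}{3}$ budget. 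Your endgame intuition about the $\frac{2}{3}$ density on a long cycle (consecutive pairs of edges separated by single gaps) is correct and is indeed what the paper uses in the girth-$\ge 5$ case, but reaching that endgame requires first solving the edge-twin-freeness problem for the deleted subgraph, which your plan does not address.
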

\begin{proof}[Proof of Theorem~\ref{thm:eLTD}]
The proof is similar to the proof of Theorem~\ref{thm:eLD}, although it is more direct since we do not need to use the notion of \emph{weak} locating-total edge-dominating set. We use induction on the number, $m$, of edges in an edge-twin-free graph with no isolated edge. We may restrict our attention to connected graphs, since we can apply the result to each component of the graph. The claim of Theorem~\ref{thm:eLTD} is true for every (connected) graph on at most four edges (in fact there is only one such edge-twin-free graph without isolated edges, namely the path $P_5$ which satisfies $\gLTe(P_5)=2$). This establishes the base case. For the inductive hypothesis, suppose that $m > 4$ and that every edge-twin-free graph $G'$ with $m' < m$ edges and no isolated edge satisfies $\gLTe(G') \le \frac{2}{3}m'$. Let $G$ be an edge-twin-free (connected) graph $G$ without isolated vertices on $m$ edges.
We now prove a series of claims depending on the structure of $G$.

\smallskip
\begin{clm}\label{clm:tree}
If $G$ is a tree, then $\gLTe(G)\le \frac{2}{3}m$.
\end{clm}
\claimproof Suppose that $G$ is a tree. Since $G$ is edge-twin-free, the graph $G$ has diameter at least~$4$. If $\diam(G) \in\{4,5,6\}$, then the set of edges of $G$ that are not pendant edges form an ELTD-set of size at most $\frac{2}{3}m$, implying that $\gLTe(G) \le \frac{2}{3}m$, as desired. Therefore, we may assume that $\diam(G) \ge 7$. Consider a longest path in $G$, say from vertex $r$ to vertex $u$, and root the tree at $r$. Let $v$ be the parent of $u$, let $w$ be the parent of $v$, let $x$ be the parent of $w$, and let $y$ be the parent of $x$. Since $G$ is edge-twin-free, every vertex in $G$ has at most one leaf-neighbor. In particular, $d_G(v)=2$. If the vertex $y$ has a leaf-neighbor, let $G'$ be the tree obtained from $G$ by removing the vertex $x$ and all its descendants in $G$; that is, $G' = G - D[x]$. Otherwise, let $G'$ be the tree obtained from $G$ by removing only the descendants of $x$ in $G$; that is, $G' = G - D(x)$. Since $\diam(G) \ge 7$, we note that $\diam(G') \ge 4$. Further since $G$ is edge-twin-free, by construction the graph $G'$ is edge-twin-free. Therefore, we can apply induction on $G'$. Let $D'$ be an ELTD-set of $G'$ of size at most $\frac{2}{3}|E(G')|$. Let $D$ be the set formed by $D'\cup\{xw\}$ together with those edges of the subtree, $G_x$, of $G$ rooted at $x$ whose endpoints both have degree at least~$2$. Equivalently, $D$ is obtained by adding to the set $D' \cup \{xw\}$ all edges of $G_x$ that are not pendant edges in $G_x$. The resulting set $D$ forms an ELTD-set of $G$ of size at most~$\frac{2}{3}m$, as desired.~\smallqed

\medskip
By Claim~\ref{clm:tree}, we may assume that $G$ has a cycle, for otherwise the desired result follows.

\medskip
\begin{clm}\label{clm:triangle}
If $G$ contains a triangle, then $\gLTe(G)\le \frac{2}{3}m$.
\end{clm}
\claimproof Suppose that $G$ contains a triangle $T \colon uvwu$. Let $S_0$ be the set of edges containing the edges of each component of $G-\{uv,uw,vw\}$ that has at most four vertices. Let $G' = G[E(G) \setminus (E(T)\cup S_0)]$. We will now construct a set $S_1$ of edges of $G'$ that will be removed from $G'$ in order to obtain an edge-twin-free subgraph $G'' = G[E(G)\setminus(E(T)\cup S_0\cup S_1)]$.

By Observation~\ref{Ob:twins}(a), if $G'$ contains a pair of open edge-twins, then they would belong to a component of order~$4$ in $G'$. Such a component would be a component of $G-\{uv,uw,vw\}$ of order~$4$, and therefore would not belong to $G'$, a contradiction. Hence, $G'$ does not contain any pair of open edge-twins. However, $G'$ may contain some closed edge-twins.

If $G'$ contains a pair of closed edge-twins, then at least one of them is incident with a vertex of $T$. In fact, these two closed edge-twins of $G'$ could be part of a set $F$ of mutually closed edge-twins of $G'$, at least $|F|-1$ of them being incident with a (distinct) vertex of $T$ (hence, $|F| \le 4$). Note that $G'$ contains at most three such sets of mutually closed edge-twins (at most one for each vertex of $T$). Moreover, if it contains three such sets, they are all of size~$2$; if it contains two such sets, one is of size~$2$ and one is of size at most~$3$. Let $F = \{f_1,\ldots,f_k\}$ ($2\le k\le 4$) be such a set of mutually closed edge-twins in $G'$. Then, all the edges of $F$ have a common endpoint $x$. Note that if $|F|=2$, then possibly there is an edge (of $T$ or $G'$) forming a triangle with $f_1$ and $f_2$. Consider $|F|-1$ edges of $F$ each of which is incident with a (distinct) vertex of $T$. Let $F'=\{f_2,\ldots,f_k\}$ denote these $|F|-1$ edges of $F$.
Removing $F'$ from $G'$ clearly makes sure that the remaining edge, $f_1$, of $F$ has no closed edge-twin in $G'$. However, $f_1$ could now be an open edge-twin with some edge of $G'$, in which case $F$ belongs to a component of $G'$ induced by the vertices belonging to edges of $F$, together with an additional path $xyz$ of length~$2$ attached to $x$, with $d_{G'}(y)=2$ and $d_{G'}(z)=1$ (possibly, $y$ or $z$, but not both, can belong to $V(T)$). We call such a component of $G'$ a bad twin component of $G'$. Nevertheless, there is no other possibility of creating a new pair of edge-twins when removing $F'$ from $G'$ (indeed, the only possibility could be, if $|F|=2$, that $f_1$ and $f_2$ form a triangle with some edge of $G'$, and that this edge is an open edge-twin with an edge incident with $x$; but then the edges of $F$ are part of a component of $G-\{uv,uw,vw\}$ of order~$4$, a contradiction). Therefore, if $F$ is not contained in a bad twin component of $G'$, we add $F'$ to $S_1$. Otherwise, we add the entire edge set of the bad twin component containing $F$ to $S_1$. We repeat this process for each of the (at most three) sets of mutually closed edge-twins of $G'$.

Now, consider $G''=G[E(G)\setminus(E(T)\cup S_0\cup S_1)]$, which is an edge-twin-free graph with no isolated edges. Let $|E(G'')|=m''$. Applying the inductive hypothesis to the graph $G''$, there exists an ELTD-set, $D''$, of $G''$ of size at most $\frac{2}{3}m''$.

Now, we build a set $D$ from the set $D''$ as follows. Initially, we let $D = D''$. Let us first handle the edges of $S_0$. We consider each component $C$ of $G[S_0]$ (which is also a component of $G-\{uv,uw,vw\}$) independently. Since $G$ is connected, each such component $C$ has order at most~$4$ and must contain a vertex $x \in\{u,v,w\}$. If $C$ has four vertices and at least five edges, we add to $D$ two edges that are incident with $x$, as well as a third edge of $C$ (if there is a third edge of $C$ incident with $x$, we choose it; otherwise, we choose the edge forming a triangle with the first two selected edges). If $C$ is isomorphic to $C_4$, then we add to $D$ the two edges that are incident with $x$. If $C$ has order~$4$ and size~$4$ and is different from $C_4$, then $C$ consists of a triangle with a pendant edge added to one of the vertices of the triangle. In this case, by the edge-twin-freeness of $G$, the vertex $x$ belongs to the triangle of $C$ and we add to $D$ two edges of $C$ incident with $x$. If $C$ has order~$4$ and and size~$3$, then by the edge-twin-freeness of $G$, it must be isomorphic to $P_4$ or $K_{1,3}$. In the former case, we add to $D$ two adjacent edges of $C$, at least one of which is incident with $x$. In the latter case, $G$ is isomorphic to $K_4$ and $E(T)$ is an ELTD-set of $G$, so we are done. If $C$ has order~$3$, then since $G$ is edge-twin-free, $C$ is isomorphic to $P_3$. In this case, we select an edge of $C$ incident with $x$ and add it to $D$. For each $P_2$-component of $G-\{uv,uw,vw\}$, we do not add the edge of this component to~$D$.

We now handle the edges of $S_1$. We consider each component of $G[S_1]$ independently. Let $C$ be such a component. Suppose first that $C$ corresponds to a bad twin component of $G'$. Let $\{f_1,\ldots,f_k\}$ be the set of $k$ mutually closed edge-twins in $C$, where $k \in \{2,3,4\}$, and let $x$ be the common vertex incident with these $k$ edges. Further, let $y$ be the degree~$2$ vertex in $C$ adjacent to $x$, and let $z$ be the vertex of degree~$1$ in $C$ adjacent to $y$. At least $k-1$ of the edges of $\{f_1,\ldots,f_k\}$ are incident with a (distinct) vertex of $T$. Renaming edges if necessary, we may assume that $f_2,\ldots,f_k$ are incident with a vertex of $T$. Possibly, if $k=2$, $f_1$ and $f_2$ form a triangle with an additional edge of $C$, and possibly $y$ or $z$ (but not both) belong to $V(T)$. We now add the edge $xy$ and the edges $f_2,\ldots,f_k$ to $D$. Moreover, if $k=2$ and $f_1$ and $f_2$ form a triangle with a third edge of $C$ (in this case, the component $C$ has five edges), then we also add $f_1$ to $D$. Now, assume that $C$ does not correspond to a bad twin component of $G'$. Then, $C$ is isomorphic to $P_2$, $P_3$ or to the claw $K_{1,3}$, and each edge of $C$ is incident with a distinct vertex of $T$. If $C$ is a $P_2$-component, as for the $P_2$-components of $G[S_0]$, we do not add any edge of $C$ to $D$. If $C$ is a $P_3$-component, again, as for the $P_3$-components of $G[S_0]$, we add one of the two edges of $C$ to $D$. If $C$ is a $K_{1,3}$-component, then $G[E(T)\cup S_0\cup S_1]$ is isomorphic to $K_4$ and we add $E(T)$ to $D$.

Finally, we consider the edges of $T$. If $G[E(T)\cup S_0\cup S_1]$ is isomorphic to $K_4$, then we have added $E(T)$ to $D$ in the previous step; we do not add any further edge to $D$. Recall that each vertex of $T$ is incident with at most one component of $G[S_0\cup S_1]$. If some $P_3$-component of $G[S_0\cup S_1]$ has its two edges incident with vertices of $T$, then we may assume these two vertices are $v$ and $w$, and we add $uv$ and $uw$ to $D$. Now, consider the components of $G[S_0\cup S_1]$ that are isomorphic to $P_2$. If each of $u$, $v$, $w$ is incident with such a $P_2$-component, then we add $E(T)$ to $D$. Otherwise, we may hence assume that the vertex $u$ is not incident with such a $P_2$-component of $G[S_0\cup S_1]$. If both $v$ and $w$ are incident with such a $P_2$-component, then we add $E(T)$ to $D$. Finally, if at most one of $v$ and $w$ is incident with such a component, we add the two edges $uv$ and $uw$ to $D$. This completes the construction of $D$. We note that we always have $\{uv,uw\} \subseteq D$.

It is clear by the construction of $D$, that $|D|\le \frac{2}{3}m$. We must now show that either $D$ is an ELTD-set of $G$ or can be modified to produce a new ELTD-set of $G$ of the same size as $D$.

By construction of $D$, the set $D$ is an edge-total-dominating set. Suppose, for the sake of contradiction, that two edges $e$ and $f$ in $E(G) \setminus D$ are not located by $D$. Since any edge $e$ of $G''$ is located (within $V(G'')$) by $D''$ and hence by $D$, at least one of $e$ and $f$, say $e$, belongs to $E(T)\cup S_0\cup S_1$.

Assume that $f$ belongs to $G''$. Then, $f$ is dominated by an edge $g$ of $D''$, and hence $e$ must also be dominated by $g$. The edge $e$ therefore belongs to $E(T) \cup S_1$ and $e$ does not belong to a bad twin component of $G'$. %
We show that $e \in E(T)$. Suppose, to the contrary, that $e \in S_1$. Thus, the edge $e$ is incident with a vertex of $T$, say $t$. By the way in which the set $S_1$ is constructed, the edge $f$ is not incident with the vertex~$t$. Recall that $\{uv,uw\} \subseteq D$. If $t = u$, then the two edges $uv$ and $uw$ locate $e$ and $f$, a contradiction. Therefore, renaming $v$ and $w$, if necessary, we may assume that $t = v$. If $f$ is not incident with $u$, then $e$ and $f$ are located by $uv$, a contradiction. Hence, $f$ is incident with $u$. But then the edge $uw$ locates $e$ and $f$, a contradiction. Therefore, $e \in E(T)$.

Since $e \in E(T) \setminus D$ and $\{uv,uw\} \subseteq D$, the edge $e=vw$ and is dominated by both $uw$ and $uv$. Therefore, $f$ must be incident with $u$ in order to also be dominated by both $uv$ and $uw$. Further, $g$ is incident with $v$ or $w$. Renaming $v$ and $w$, if necessary, we may assume that $g$ is incident with $v$. Let $z$ be the common endpoint of $f$ and $g$. Thus, $f = uz$ and $g = vz$. Let $h$ be an edge that totally dominates the edge $g$ in $G''$. If $h$ is incident with $v$, then $h$ locates the edges $e$ and $f$, a contradiction. Therefore, the edge $h$ is incident with $z$, and $h$ must be the edge $wz$. Now, note that $G[E(T)\cup S_0\cup S_1]$ consists only of the triangle $T$, and $G[E(T) \cup \{f,g,h\}]$ is isomorphic to $K_4$. Thus, the set $(D \setminus\{uw\}) \cup \{e\}$ is an ELTD-set of $G$ of the same size as $D$. Hence, we may assume that $f$ does not belong to $G''$, for otherwise we are done. With this assumption, all edges of $G''$ are located by $D$ and both $e$ and $f$ belong to $E(T) \cup S_0\cup S_1$.

By construction of $D$, all edges in a component of $G[S_0 \cup S_1]$ of order at least~$3$ are located. We note that this includes the components that correspond to the bad twin components of $G'$. Moreover, by the way in which the set $D$ is constructed, each edge of $T$ and each edge of a $P_2$-component of $G[S_0\cup S_1]$ is located by $D$. This completes the proof of Claim~\ref{clm:triangle}.~\smallqed

\medskip
By Claim~\ref{clm:triangle}, we may now assume that $G$ has no triangle, for otherwise the desired result follows.

\begin{clm}\label{clm:4cycle}
If $G$ contains a $4$-cycle, then $\gLTe(G)\le \frac{2}{3}m$.
\end{clm}
\claimproof
Let $C \colon pqrsp$ be a $4$-cycle of $G$. We construct two sets $S_0$ and $S_1$ of edges analogously to Claim~\ref{clm:triangle}. First of all, $S_0$ contains the edges of each component of $G-E(C)$ that has at most four vertices. Second, each pair of edge-twins of $G' = G[E(G) \setminus (E(C) \cup S_0)]$ must be a pair of closed edge-twins. Observe that any set $F$ of mutually closed edge-twins in $G'$ consists of at most three edges incident with a common vertex not in $C$, with at least $|F|-1$ of these edge-twins incident with a (distinct) vertex of $C$. Further, by the triangle-freeness of $G$, at most two of these edge-twins can be incident with a vertex of $C$. Once again, if removing $|F|-1$ of these edge-twins that are incident with a vertex of $C$ from $G'$ creates a new pair of open edge-twins, we call the component of $G'$ containing the edges of $F$, a bad twin component of $G'$. For each set $F$ of mutually closed edge-twins of $G'$, if they belong to a bad twin component $K$ of $G'$, then we add $E(K)$ to the set $S_1$. Otherwise, we add $|F|-1$ edges of $F$ that are incident with a vertex of $C$ to the set $S_1$.

We now consider the graph $G'' = G[E(G)\setminus (E(C)\cup S_0\cup S_1)]$, which is an edge-twin-free graph with no isolated edges. Applying the inductive hypothesis to the graph $G''$, there exists an ELTD-set, $D''$, of $G''$ of size at most $\frac{2}{3}|E(G'')|$.

We build a set $D$ from the set $D''$ as follows. Initially, we let $D = D''$. We first handle the components, $K$, of $G[S_0]$ of order~$4$. Since $G$ is triangle-free, either $K$ is isomorphic to $C_4$ or to $P_4$ or to $K_{1,3}$. We consider each case in turn. For every component $K$ isomorphic to $C_4$, the component $K$ contains a pair of edges incident with the same vertex of the $4$-cycle $C$. We include in $D$ two such edges. Let $K$ be a component of $G[S_0]$ isomorphic to $P_4$. Then, either (i) the two leaves in $K$ are incident with distinct vertices of the $4$-cycle $C$, or (ii) exactly one vertex of $K$ is incident with a vertex of $C$, or (iii) two vertices at distance~$2$ in $K$ are incident with two opposite vertices of $C$. In Case~(i), we add two consecutive edges of $K$ to $D$. In Case~(ii), we add two consecutive edges of $K$ to $D$, leaving out an edge not incident with any vertex of $C$. In Case~(iii), we add to $D$ the two edges of $K$ that are incident with the same vertex of $C$. Finally, let $K$ be a component of $G[S_0]$ isomorphic to $K_{1,3}$. Since $G$ is edge-twin-free and triangle-free, exactly two vertices of $K$ belong to $C$. Further, these two vertices of $K$ that belong to $C$ are leaves in $K$ and they are opposite vertices of $C$. We add the two edges of $K$ incident with these vertices to $D$.

Next, we handle the edges of components corresponding to bad twin components of $G'$. Let $K$ be such a component. We note that $K$ has either four or five edges. Let $F$ be the set of $k$ mutually closed edge-twins of $G'$ contained in $K$. Either $|F| = 2$ or $|F| = 3$. We now choose $|F|-1$ of these edges that are incident with a vertex of $C$, and add them to $D$. Additionally, we add to $D$ the central edge of $K$ (i.e., the edge of $K$ that dominates all edges of $K$).

Finally, we handle the edges of the $4$-cycle $C$ and the components of $G[S_0 \cup S_1]$ of order at most~$3$. If $K$ is such a component, then there are three possibilities for $K$. The component $K$ could be a $P_2$-component with exactly one vertex incident with some vertex of the $4$-cycle $C$, or a $P_3$-component with exactly one vertex incident with a vertex of $C$, or a $P_3$-component with its two leaves incident with two non-adjacent vertices of $C$, which we call \emph{opposite vertices} of $C$ (thus, $p$ and $r$ are opposite vertices of $C$, as are $q$ and $s$). Note that the edge set of $K$ is a subset of either $S_0$ or $S_1$. For each $P_3$-component we add to $D$ one edge of the  $P_3$-component that is incident with a vertex of $C$.

If $C$ is incident with at least two $P_2$-components or with four $P_3$-components of $G[S_0\cup S_1]$, then we add the four edges of $C$ to $D$. The edges of all components of $G[S_0\cup S_1]$ are then located by $D$, and since the edges of $G''$ are located within $G''$ by $D''$, the set $D$ is an ELTD-set of $G$ of size at most $\frac{2}{3}m$, and we are done.

If $C$ is incident with two or three $P_3$-components of $G[S_0\cup S_1]$, then we add three edges of $C$ to $D$. We make sure that if there is an edge of $C$ not incident with a vertex of a $P_3$-component, then this edge belongs to $D$. Then, the edge of $C$ not in $D$ is located thanks to the edge of $D\cap (S_0\cup S_1)$ it is adjacent to, and again $D$ is an ELTD-set of $G$ of size at most $\frac{2}{3}m$.

If $C$ is incident with at least two components of $G[S_0\cup S_1]$, none of which is a $P_2$-component and at most one of which is a $P_3$-component, we add two consecutive edges of $C$ to $D$. We ensure that if there is a $P_3$-component in $G[S_0\cup S_1]$, it is incident with one of the two selected edges of $C$, and that if there is an edge of $C$ not incident with any component of $G[S_0\cup S_1]$, that edge is selected. Then, since each edge of $C$ not in $D$ is incident with an edge of $D \cap (S_0\cup S_1)$, all edges of $C$ are located, and again $D$ is an ELTD-set of $G$ of size at most $\frac{2}{3}m$.

If $C$ is incident with at least two components of $G[S_0\cup S_1]$, exactly one of which is a $P_2$-component and at most one of which is a $P_3$-component, then we may add three edges of $C$ to $D$. We do it in such a way that the two edges of $C$ incident with the $P_2$-component belong to $D$. Similarly, as before, $D$ is an ELTD-set of $G$ of size at most $\frac{2}{3}m$. Indeed, the edges of $S_0\cup S_1$ are located, and the edge of $C$ not in $D$ is the only edge not in $D$ adjacent with its two neighbor edges of $C$ (both of which belong to $D$), since $G$ is triangle-free.

If $C$ is incident with exactly one component $K$ of $G[S_0\cup S_1]$ and $K$ corresponds to (i) a bad twin component of $G'$, or a component of $G[S_0\cup S_1]$ either (ii) of order at most~$3$ or (iii) isomorphic to $C_4$, then we do as in the previous paragraph: we add three edges of the $4$-cycle $C$ to $D$, making sure that if $K$ is a $P_2$-component, then the two edges of $C$ incident with $K$ belong to $D$. Again, $D$ is an ELTD-set of $G$ of size at most $\frac{2}{3}m$.

Suppose now that $C$ is incident with exactly one component $K$ of $G[S_0\cup S_1]$, but $K$ is isomorphic to $P_4$ or $K_{1,3}$. Then, $C$ contains at least one vertex incident with at least one edge of $V(K)\cap D$. We add two edges of $C$ to $D$, making sure that each vertex of $C$ is incident with an edge of $D$. Then again, $D$ is an ELTD-set of $G$ of size at most $\frac{2}{3}m$; indeed the edges of $K$ are located by $D$, and the two edges of $C$ not in $D$ are also located thanks to the edge(s) of $V(K) \cap D$.

Finally, we must handle the case where $S_0=S_1=\emptyset$. Then, if two vertices of $C$ are incident with some edge of $D''$, we construct $D$ from $D''$ by adding to $D$ two independent edges (that have no common end) of $C$ each of which is adjacent with an edge of $D''$. If at most one vertex of $C$, say $p$, is incident with an edge of $D''$, we build $D$ from $D''$ by adding the two edges of $C$ incident with $p$. Again this is an ELTD-set of $G$ of size at most $\frac{2}{3}m$ and completes the proof of Claim~\ref{clm:4cycle}.~\smallqed

\medskip
By Claims~\ref{clm:tree}, \ref{clm:triangle} and~\ref{clm:4cycle}, we may assume that $G$ has finite girth at least~$5$. Let $C \colon u_1\ldots u_ku_1$ be a shortest cycle of $G$. We build the sets $S_0$ and $S_1$ as in Claims~\ref{clm:triangle} and~\ref{clm:4cycle}. The set $S_0$ contains the edges of all components of $G-E(C)$ of order at most~$4$. For each set $F$ of mutually closed edge-twins of $G'=G[E(G)\setminus (E(C) \cup S_0)]$ (note that now such set must have size exactly~$2$, for otherwise we would obtain a cycle strictly shorter than $C$), if it belongs to a bad twin component of $G'$, the edges of this component belong to $S_1$; otherwise, the edge of $F$ incident with a vertex of $C$ belongs to $S_1$.

Since $G$ has girth at least~$5$, we note that any component of $G[S_0\cup S_1]$ is isomorphic to $P_2$, $P_3$, $P_4$ or the claw $K_{1,3}$ with one edge subdivided once (this last case corresponds to the bad twin components of $G'$). Again, $G''=G[E(G)\setminus (E(C)\cup S_0\cup S_1)]$ is edge-twin-free and has no isolated edge. Applying the inductive hypothesis to the graph $G''$, there exists an ELTD-set, $D''$, of $G''$ of size at most $\frac{2}{3}|E(G'')|$. The girth requirement of $G$ implies that the graph $G_C=G[E(C)\cup S_0\cup S_1]$ is also edge-twin-free and has no isolated edge. However, given an ELTD-set, $D_C$, of $G_C$ of size at most $\frac{2}{3}|E(G_C)|$, the set $D''\cup D_C$ might not be an ELTD-set of $G$. Indeed, there might exist a vertex $u_i$ of $C$ with two incident edges $e\in E(G_C)$ and $f\in E(G'')$, such that both $e$ and $f$ are only dominated by the edges of $D''\cup D_C$ incident with $u_i$. In this case, $e$ and $f$ are not located by $D''\cup D_C$. However, note that if an edge $u_iu_{i+1}$ of the cycle $C$ is dominated by an edge of $D''\cup D_C$ incident with $u_i$ and one incident with $u_{i+1}$, then $u_iu_{i+1}$ is located by $D''\cup D_C$. Therefore, we will use this observation to build a suitable set $D_C$.

Renaming vertices if necessary, we assume firstly that if any vertex of the cycle $C$ is incident with an edge of $D''$, then in particular $u_k$ is incident with an edge of $D''$. Now, for any $P_3$-component $K$ of $G[S_0\cup S_1]$, the set $D_C$ contains the edge of $K$ that is incident with a vertex of $C$. Likewise, if $K$ is a component of $G[S_0\cup S_1]$ isomorphic to $P_4$ or to $K_{1,3}$ with one edge subdivided once, $D_C$ contains the two edges of $K$ that are not incident with a vertex of degree~$1$ in $G$. Finally, we include in $D_C$ the set of $\left\lfloor\frac{2}{3}k\right\rfloor$ edges of $C$ inducing $\left\lfloor\frac{k}{3}\right\rfloor$ vertex-disjoint copies of $P_3$ and containing the edges $u_1u_2$, $u_2u_3$ but not $u_ku_{k-1}$ if $k \not\equiv 0 \, (\bmod \, 3)$ and not $u_{k-1}u_{k-2}$ if $k \equiv 2 \, (\bmod \, 3)$. For example, if $k \in \{6,7,8\}$, we add to $D_C$ the four edges $\{u_1u_2, u_2u_3, u_4u_5, u_5u_6\}$.

Now, if $k \equiv 0 \, (\bmod \, 3)$, then since $G$ has girth at least~$5$ and each vertex of $C$ is incident with an edge of $D'' \cup D_C$, by our previous observation this set is an ELTD-set of $G$. Since clearly, $|D_C| \le \frac{2}{3}|E(G_C)|$, we are done.

Suppose $k \equiv 1 \, (\bmod \, 3)$. If the vertex $u_k$ is incident with a vertex of a $P_2$- or $P_3$-component of $G[S_0\cup S_1]$, then we add the edge $u_{k-1}u_k$ to $D_C$. Again, we have $|D_C| \le \frac{2}{3}|E(G_C)|$. Moreover, by our assumption on the vertex $u_k$, if some vertex of $C$ is incident with an edge of $D''$, then $u_k$ is such a vertex. In that case, all vertices of $C$ are incident with some edge of $D''\cup D_C$, which by the previous arguments, imply as before that the set $D'' \cup D_C$ is an ELTD-set of $G$. Otherwise, if no vertex of $C$ is incident with an edge of $D''$, then the two edges $u_ku_1$ and $u_{k-1}u_k$ might be dominated only by the edges of $D_C$ incident with $u_1$ and $u_{k-1}$, respectively. However, then the edge $u_ku_1$ is uniquely dominated by the edge $u_1u_2 \in D_C$, and the edge $u_{k-1}u_k$ is uniquely dominated by the edge $u_{k-2}u_{k-1} \in D_C$, implying once again that $D'' \cup D_C$ is an ELTD-set of $G$, and we are done.

Suppose, finally, that $k \equiv 2 \, (\bmod \, 3)$. We now proceed as follows. If any of $u_{k-1}$ and $u_k$ is incident with the edge of a $P_3$-component of $G[S_0\cup S_1]$, we add the edge $u_{k-1}u_k$ to $D_C$. If any of $u_{k-1}$ and $u_k$ is incident with the edge of a $P_2$-component of $G[S_0\cup S_1]$, we add the edges $u_{k-2}u_{k-1}$ and $u_{k-1}u_k$ to $D_C$. In both cases, by the same arguments as previously, we are done. Otherwise, we add the edge $u_{k-2}u_{k-1}$ to $D_C$. Using our choice of $u_k$, we can repeat the same arguments, as in the previous case when $k \equiv 1 \, (\bmod \, 3)$, to show that $D'' \cup D_C$ is an ELTD-set of $G$. This completes the proof of Theorem~\ref{thm:eLTD}.
\end{proof}

\medskip
The following is a reformulation of Theorem~\ref{thm:eLTD} in the language of line graphs, showing that Conjecture~\ref{conj-LTD} is true for this class of graphs.

\begin{cor}\label{cor:eLTD}
If $G$ is a twin-free line graph of order $n$ without isolated vertices, then $\gLT(G)\le \frac{2}{3}n$.
\end{cor}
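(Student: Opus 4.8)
The plan is to deduce Corollary~\ref{cor:eLTD} from Theorem~\ref{thm:eLTD} by exactly the same translation between a graph and its line graph that was used to pass from Theorem~\ref{thm:eLD1} to Corollary~\ref{cor:eLD}. Since $G$ is a line graph, there is a graph $H$ with $G = \cL(H)$ (we need only one such $H$; its non-uniqueness is irrelevant). Under this identification the vertices of $G$ are precisely the edges of $H$, so the order of $G$ equals the size of $H$; that is, $n = |V(G)| = |E(H)|$.

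First I would record the dictionary between the two settings. Two vertices of $G = \cL(H)$ are twins exactly when the corresponding two edges of $H$ are edge-twins, and a vertex of $\cL(H)$ is isolated exactly when the corresponding edge of $H$ is an isolated edge. Consequently, since $G$ is twin-free and has no isolated vertex, the graph $H$ is edge-twin-free and has no isolated edge, so $H$ satisfies the hypotheses of Theorem~\ref{thm:eLTD}.

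Next I would verify that edge-locating-total-domination in $H$ corresponds to locating-total-domination in $G$, in the manner already noted for the non-total case before Corollary~\ref{cor:eLD}. Adjacency in $\cL(H)$ is by definition the edge-adjacency of $H$, so for a set $D \subseteq E(H)$ with corresponding vertex set $D^\ast \subseteq V(G)$ and for any edge $e$ of $H$, the neighbors of $e$ in $D$ (as edges of $H$) coincide with the neighbors of the vertex $e$ in $D^\ast$ (in $G$). Hence $D$ totally dominates every edge of $H$ if and only if $D^\ast$ is a total dominating set of $G$, and two edges $e,f \notin D$ satisfy $N(e)\cap D \ne N(f)\cap D$ if and only if the vertices $e,f \notin D^\ast$ satisfy $N(e)\cap D^\ast \ne N(f)\cap D^\ast$. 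Thus $D$ is an ELTD-set of $H$ if and only if $D^\ast$ is a locating-total dominating set of $G$.

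Finally, applying Theorem~\ref{thm:eLTD} to $H$ yields an ELTD-set $D$ of $H$ with $|D| \le \frac{2}{3}|E(H)| = \frac{2}{3}n$; the corresponding vertex set $D^\ast$ is then a locating-total dominating set of $G$ of the same cardinality, giving $\gLT(G) \le \frac{2}{3}n$. There is essentially no obstacle beyond Theorem~\ref{thm:eLTD} itself: the only point demanding care is the bookkeeping in the dictionary above, and in particular confirming that both the \emph{total} domination condition and the \emph{location} condition transfer faithfully across the line-graph correspondence (the former because edge-total-domination and vertex-total-domination are governed by the same adjacency relation, the latter because open neighborhoods are preserved under the correspondence). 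All the genuine difficulty is contained in the inductive construction that establishes Theorem~\ref{thm:eLTD}.
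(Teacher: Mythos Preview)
Your proposal is correct and is precisely the approach the paper takes: Corollary~\ref{cor:eLTD} is stated as a direct reformulation of Theorem~\ref{thm:eLTD} via the line-graph correspondence, using the same dictionary (edge-twins $\leftrightarrow$ twins, isolated edges $\leftrightarrow$ isolated vertices, ELTD-sets $\leftrightarrow$ LTD-sets) already spelled out before Corollary~\ref{cor:eLD}. There is nothing to add.
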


Theorem~\ref{thm:eLTD} (and hence Corollary~\ref{cor:eLTD}) is tight. Indeed, each star where every edge is subdivided twice has edge-location-total-dominating number two thirds its size. Additionally, observe that the $6$-cycle has edge-location-total-dominating number~$4$. See Figure~\ref{fig:ELTD-tight} for an illustration.

\begin{figure}[ht]
\centering
\subfigure[A family of trees.]{\scalebox{1.0}{\begin{tikzpicture}[join=bevel,inner sep=0.6mm,line width=0.8pt, scale=0.8]

\node[draw,shape=circle] (c) at (0,0) {};

\node[draw,shape=circle](u0) at (200:1) {};
\node[draw,shape=circle](v0) at (200:2) {};
\node[draw,shape=circle](w0) at (200:3) {};

\node[draw,shape=circle](u1) at (225:1) {};
\node[draw,shape=circle](v1) at (225:2) {};
\node[draw,shape=circle](w1) at (225:3) {};

\node[draw,shape=circle](u2) at (250:1) {};
\node[draw,shape=circle](v2) at (250:2) {};
\node[draw,shape=circle](w2) at (250:3) {};

\node[draw,shape=circle](u3) at (275:1) {};
\node[draw,shape=circle](v3) at (275:2) {};
\node[draw,shape=circle](w3) at (275:3) {};

\node[draw,shape=circle](u4) at (340:1) {};
\node[draw,shape=circle](v4) at (340:2) {};
\node[draw,shape=circle](w4) at (340:3) {};

\draw[line width=2.5pt] (c)--(u0)--(v0) (c)--(u1)--(v1) (c)--(u2)--(v2) (c)--(u3)--(v3) (c)--(u4)--(v4);

\draw[line width=0.5pt] (w0)--(v0) (w1)--(v1) (w2)--(v2) (w3)--(v3) (w4)--(v4);

\node[rotate=35] at (308:2) {$\ldots$};

  \end{tikzpicture}}}\qquad
\subfigure[The $6$-cycle.]{\scalebox{1.0}{\begin{tikzpicture}[join=bevel,inner sep=0.6mm,line width=0.8pt, scale=0.8]

\node[draw,shape=circle](u0) at (0:1.33) {};
\node[draw,shape=circle](u1) at (60:1.33) {};
\node[draw,shape=circle](u2) at (120:1.33) {};
\node[draw,shape=circle](u3) at (180:1.33) {};
\node[draw,shape=circle](u4) at (240:1.33) {};
\node[draw,shape=circle](u5) at (300:1.33) {};

\draw[line width=2.5pt] (u5)--(u0)--(u1) (u2)--(u3)--(u4);

\draw[line width=0.5pt] (u1)--(u2) (u4)--(u5);

  \end{tikzpicture}}}

  \caption{Edge-twin-free graphs with edge-location-total-dominating number two-thirds the size. The thick edges are part of an optimal ELTD-set.}
  \label{fig:ELTD-tight}
\end{figure}
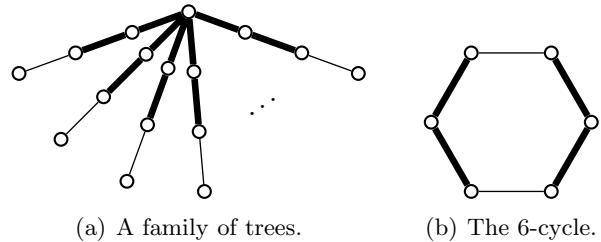

\medskip

\end{document}